\setlist[enumerate, 1]{label = \roman*., ref = \roman*}
\setlist[enumerate, 2]{label = \theenumi.\alph*}
\author[Gilabert]{Mart\'in Gilabert Vio}
\address{\hskip-\parindent{}
	Institut Camille Jordan, Université Claude Bernard Lyon 1, 43 boulevard du 11 novembre
1918, 69622 Villeurbanne, France.}
\email{gilabert@math.univ-lyon1.fr // martingilabertvio@gmail.com}
\date{\today}
\subjclass[2020]{Primary 37H99; Secondary 20P05}
\keywords{Random dynamical systems, Tits alternative, group actions on the circle}
\title{Probabilistic Tits alternative for circle diffeomorphisms}
\begin{document}

\begin{abstract}
Let $\mu_1, \mu_2$ be probability measures on $\mathrm{Diff}^1_+(S^1)$ satisfying a suitable moment condition and such that their supports genererate discrete groups acting proximally on $S^1$. Let $(f^n_\omega)_{n \in \N}, (f^n_{\omega'})_{n \in \N}$ be two independent realizations of the random walk driven by $\mu_1, \mu_2$ respectively. We show that almost surely there is an $N \in \N$ such that for all $n \geq N$ the elements $f^n_\omega, f^n_{\omega'}$ generate a nonabelian free group. The proof is inspired by the strategy by R. Aoun for linear groups and uses work of A. Gorodetski, V. Kleptsyn and G. Monakov, and of P. Barrientos and D. Malicet. A weaker (and easier) statement holds for measures supported on $\mathrm{Homeo}_+(S^1)$ with no moment conditions.
\end{abstract}

\maketitle

\section{Context and contributions}
The Tits alternative is a celebrated theorem by J. Tits which asserts that finitely generated linear groups are either virtually solvable or contain a nonabelian free group \cite{titsFree}. This alternative fails for groups of homeomorphisms of the circle, but a weaker alternative (sometimes called a \emph{dynamical Tits alternative}, see \cite{malicetMiliton}) still holds.
\begin{teo}[G. Margulis \cite{margulisCircle}, see also V. Antonov \cite{antonov}] \label{teo:margulis}
Let $G$ be a subgroup of $\mathrm{Homeo}(S^1)$. Then either
\begin{enumerate}
	\item $G$ is \emph{elementary}, that is, the action of $G$ preserves a probability measure on $S^1$, or
	\item \label{it:M2} $G$ contains a \emph{ping-pong pair}, that is, two elements $f,g \in G$ such that there are pairwise disjoint open subsets $U_1, U_2, V_1, V_2$ of $S^1$ with $f(S^1 - U_1) \subseteq V_1,\, g(S^1 - U_2) \subseteq V_2$.
\end{enumerate}
\end{teo}

The previous options are mutually exclusive and, by the ping-pong lemma, condition \eqref{it:M2} implies that $f,g$ generate a nonabelian free group in $G$. We are interested in how generic these two elements are. As a first approximation, there is a dense $G_\delta$ subset $W$ of $\mathrm{Homeo}(S^1) \times \mathrm{Homeo}(S^1)$ such that any pair of elements in $W$ generate a nonabelian free group \cite[Proposition 4.5]{ghysCircle} (see also \cite[Theorem 6.9]{triestinoBook}). Our viewpoint will be probabilistic instead of topological, inspired by the following result of R. Aoun for linear groups. We first fix some notation: a probability measure $\mu$ on a group $G$ is said to be \emph{nondegenerate} if the semigroup generated by its support is all $G$. Given nondegenerate probability measures $\mu_1, \, \mu_2$ on groups $G_1,\, G_2$ we let $(\Omega_i,\P_i),\, i = 1,2$ be the probability space $(G_i^\N, \mu_i^{\otimes \N})$, and we write $\omega = (f_{\omega_n})_{n \in \N}$ for an element of $\Omega_1$ and $\omega' = (f_{\omega'_n})_{n \in \N}$ for an element of $\Omega_2$. Also, denote $f^n_\omega$ for the right random walk $f_{\omega_n} \circ f_{\omega_{n-1}} \circ \cdots \circ f_{\omega_0}$ at time $n \in \N$.

\begin{teo}[R. Aoun \cite{aoun, aoun1}] \label{teo:aoun}
Let $G$ be a real algebraic linear group that is semisimple and with no compact factors, and let $G_1,\, G_2$ be Zariski-dense subgroups of $G$. If $\mu_1, \mu_2$ are nondegenerate probability measures on $G$ with finite exponential moment, then there exists a $\rho \in (0,1)$ such that
\[
	\P_1\otimes \P_2\left[ (\omega,\omega') \in \Omega_1 \times \Omega_2 \text{ such that } f^n_\omega,f^n_{\omega'} \text{ are a ping-pong pair}\, \right] \geq 1 -\rho^n
\] for all sufficiently large $n \in \N$.
\end{teo}

In the previous statement, two elements $f,g \in \mathrm{GL}_d(\R), d \in \N$ are said to be a \emph{ping-pong pair} if the conditions in \eqref{it:M2} hold for their natural action on projective space $\mathrm{P}\R^d$, and the measure $\mu$ is said to have \emph{exponential moment} if $\int_G \norm{g}^\delta \dd \mu(g)$ is finite for some $\delta > 0$ (here $\norm{ \cdot }$ is any norm on $d \times d$ matrices).

When specialized to $\mathrm{PSL}_2(\R)$ acting on $S^1$, the proof shows that the situation depicted in Figure \ref{fig:model} occurs with probability converging to 1 exponentially fast in $n \in \N$. That is, there exist disjoint intervals $I_{n,\omega}, I_{n,\omega'}, J_{n,\omega},J_{n.\omega'} \subset S^1$ that testify that $f^n_\omega, f^n_{\omega'}$ are a ping-pong pair. The intervals $I_{n,\omega}, I_{n,\omega'}$ can be taken centered around $f^n_\omega(x), f^n_{\omega'}(y)$ where $x,y \in S^1$ are arbitrary and fixed beforehand. The intervals $J_{n,\omega},J_{n,\omega'}$ converge as $n$ increases to the \emph{repulsors} $\sigma(\omega), \sigma(\omega')$ of the random walks $f^n_\omega, f^n_{\omega'}$ (see the next section for the definition of $\sigma$). To control the probabilities that they intersect, their diameters decrease to 0 exponentially fast in $n$.

\begin{figure}[H]
\centering
\begin{tikzpicture}[scale=1]
\draw (0,0) circle (2.2);

\draw[thick,red] ([shift=(22.5:2)]0,0) arc (22.5:90-22.5:2) node[black, below left, pos=.5] {$I_{n,\omega}$};
\draw[thick,blue] ([shift=(180+22.5:2)]0,0) arc (180+22.5:270-22.5:2) node[black, above right, pos=.5] {$J_{n,\omega}$};

\draw[thick,<-|] ([shift=(90-22.5:2)]0,0) arc (90-22.5:180+22.5:2) node[below right, pos=.5] {$f^n_\omega$};
\draw[thick,<-|] ([shift=(22.5:2)]0,0) arc (22.5:-90-22.5:2);

\draw[thick,red] ([shift=(90+22.5:2.4)]0,0) arc (90+22.5:180-22.5:2.4) node[black, above left, pos=.5] {$I_{n,\omega'}$};
\draw[thick,blue] ([shift=(270+22.5:2.4)]0,0) arc (270+22.5:360-22.5:2.4) node[black, below right, pos=.5] {$J_{n,\omega'}$};

\draw[thick,|->] ([shift=(-22.5:2.4)]0,0) arc (-22.5:90+22.5:2.4) node[above right, pos=.5] {$f^n_{\omega'}$};
\draw[thick,<-|] ([shift=(180-22.5:2.4)]0,0) arc (180-22.5:270+22.5:2.4);
\end{tikzpicture}
\label{fig:model}
\end{figure}

The main result of this paper shows that that this situation remains typical for a pair of independent random walks on countable subgroups of $\mathrm{Diff}^1_+(S^1)$, the group of orientation-preserving diffeomorphisms of $S^1$, provided the action of the subgroups on $S^1$ is proximal. This condition is almost always fulfilled for a group acting on $S^1$ admitting no invariant measures on $S^1$, see Theorem \ref{teo:classification} below for a precise statement. For a function $\phi \colon S^1 \to \R$, denote
\[
	\abs{\phi}_{\mathrm{Lip}} = \sup_{x \neq y \in S^1}\frac{\abs{\phi(x) - \phi(y)}}{d(x,y)}.
\]

\begin{thmA} \label{teo:a}
Let $G_1,\, G_2$ be countable subgroups of $\mathrm{Diff}^1_+(S^1)$ such that the actions of $G_1$ and of $G_2$ on $S^1$ are proximal. Let $\mu_1, \, \mu_2$ be nondegenerate probability measures on $G_1, \, G_2$ respectively such that there exists a $\delta > 0$ so that the integral
\[
	\int_{G_i} \max\left\{\abs{g}_{\mathrm{Lip}}, \abs{g^{-1}}_{\mathrm{Lip}}\right\}^\delta \dd \mu(g)
\] is finite for $i = 1,2$.

Then there exists a $\rho \in (0,1)$ such that
\[
	\P_1\otimes \P_2\left[ (\omega,\omega') \in \Omega_1 \times \Omega_2 \text{ such that } f^n_\omega,f^n_{\omega'} \text{ are a ping-pong pair}\, \right] \geq 1 -\rho^n
\] for all sufficiently large $n \in \N$.
\end{thmA}

As with Theorem \ref{teo:aoun}, the Borel-Cantelli lemma immediately implies the following.

\begin{corA} \label{cor:cs}
Let $\mu_1, \mu_2$ be probability measures on $\mathrm{Diff}^1_+(S^1)$ satisfying the same assumptions as in Theorem \ref{teo:a}. For $\P_1\otimes \P_2$-almost every $(\omega, \omega')$ there exists an $N \in \N$ such that $f^n_\omega, f^n_{\omega'}$ generate a nonabelian free group for all $n \geq N$.
\end{corA}

The conclusion of Theorem \ref{teo:a} is known to be true in other settings: if $M$ is a proper hyperbolic space such that $\mathrm{Isom}(M)$ acts cocompactly on $M$ and $\mu$ is a measure on $\mathrm{Isom}(M)$ generating a nonelementary group, then this is \cite[Theorem 1.10]{aounSert2022}. The case of nonelementary hyperbolic groups acting on their Gromov boundary and finitely supported $\mu$ was treated previously in \cite{gilmanMiasnikovOsin2010}.

We do not know if the statement in Corollary \ref{cor:cs} is true for groups of biLipschitz homeomorphisms of $S^1$.\footnote{The relevance of the biLipschitz condition comes from the fact that any countable subgroup of $\mathrm{Homeo}_+(S^1)$ is conjugated to a group of biLipschitz homeomorphisms, see \cite[Théorème D]{deroinKleptsynNavas}.} To put this in perspective, we show that a weakening of Corollary \ref{cor:cs} is true for groups of homeomorphisms of $S^1$, even without moment assumptions on the measures $\mu_i$ and relaxing the proximality assumption on the $G_i$ to ``no invariant measures''. It is an application of results in \cite{deroinKleptsynNavas}, but has not appeared previously in the literature up to our knowledge. Denote by $\mathrm{Homeo}_+(S^1)$ the group of orientation-preserving homeomorphisms of $S^1$.

\begin{thmA}\label{teo:b}
Let $G_1,\, G_2$ be countable subgroups of $\mathrm{Homeo}_+(S^1)$ such that the actions of $G_1$ and of $G_2$ on $S^1$ do not admit invariant probability measures, and let $\mu_1, \mu_2$ be nondegenerate probability measures on $G_1,\, G_2$ respectively. Then for $\P_1 \otimes \P_2$-almost every $(\omega, \omega') \in \Omega_1 \times \Omega_2$, the set of $n \in \N$ such that $f^n_\omega,\, f^n_{\omega'}$ are a ping-pong pair has density 1, that is,
\[
	\lim_{N \to \infty}\frac{1}{N}\abs{\{0\leq n \leq N \mid f^n_{\omega},\, f^n_{\omega'} \text{ are a ping-pong pair}\}} = 1.
\]
\end{thmA}

The proof of Theorem \ref{teo:b} requires only the tools developped in \cite[Appendice]{deroinKleptsynNavas} and general statements on contracting random dynamical systems from \cite{malicet}. In contrast, the proof of Theorem \ref{teo:a} requires more ingredients. For instance, to apply the strategy of \cite{aoun} in this context it is essential to know that exponential contractions occur in mean and that the stationary measure is Hölder continuous (see Theorems \ref{teo:mean} and \ref{teo:holder} below respectively). The first condition has been already proven in different situations in the literature by K. Gelfert and G. Salcedo \cite{GScontracting}, A. Gorodetski and V. Kleptsyn \cite{KG}, and P. Barrientos and D. Malicet \cite{BarrientosMalicet2024}, all of which require (at least) that $\mu$ be supported on $\mathrm{Diff}_+^1(S^1)$. The second one is a very general theorem by A. Gorodetski, V. Kleptsyn and G. Monakov \cite{holderMeasure}. One important difference with the linear setting lies in the dynamics of individual elements of $\mathrm{Homeo}_+(S^1)$: very ``contracting'' homeomorphisms of the circle do not have a canonically defined repulsor or attractor. Proposition \ref{prop:contraccionExp} below deals with this issue.

\subsection*{Acknowledgements}
The author wishes to thank K. Gelfert for answering questions on \cite{gelfertSalcedo2024}, V. Kleptsyn for useful conversations around this subject and B. Deroin for pointing out \cite{aoun}. The author also thanks his advisor N. Matte Bon for constant encouragement and useful advice.

\section{Preliminaries} \label{sec:preliminaries}
We review some basic theory on random dynamical systems and groups acting on the circle, and introduce some notation. For more details on this material, see \cite{ghysCircle, deroinKleptsynNavas, malicet}.

\subsection*{Notation}
Given a metric space $(X,d)$, $A \subseteq X$ and $\epsilon >0$, we denote $A^\epsilon = \{x \in X\mid d(x,A) \leq \epsilon\}$. We will denote by $d$ the metric on $S^1$ coming from an identification $S^1 = \R/\Z$, so $\mathrm{diam}(S^1) = 1/2$.

\subsection*{Random dynamical systems}
A \emph{random dynamical system} $(G,\mu) \curvearrowright X$ (or a \emph{random walk on $X$}) is the data of a group $G$ acting by homeomorphisms on a compact metric space $(X,d)$ and of a probability measure $\mu$ on $G$. We always assume that $\mu$ is \emph{nondegenerate}, that is, that the semigroup generated by $\mu$ is $G$. Denote by $(\Omega, \P)$ the probability space $(G^\N, \mu^{\otimes \N})$ and set $f^n_\omega = f_{\omega_n} \circ \cdots \circ f_{\omega_0}$ when $n\in \N, \,  \omega = (f_{\omega_k})_{k \in \N} \in \Omega$. A \emph{$\mu$-stationary measure} is a Borel probability measure on $X$ such that $\mu \ast \nu = \nu$, where
\[
	\mu\ast \nu(A) = \int_G \nu(g^{-1}A) \dd \mu(g)
\] for all Borel $A \subseteq X$.

We say that $(G,\mu) \curvearrowright X$ is \emph{locally contracting} if for all $x \in X$, $\P$-almost surely there exists a neighborhood $B \subset X$ of $x$ such that $\mathrm{diam}(f^n_\omega(B)) \xrightarrow[n \to \infty]{}0$.

\begin{prop}[{\cite[Propositions 4.8 and 4.9]{malicet}}] \label{prop:medidasEstacionarias}
Suppose $(G, \mu) \curvearrowright X$ is locally contracting. Then there are finitely many ergodic $\mu$-stationary measures $\nu_1,\ldots, \nu_d$, and their supports are exactly the minimal $G$-invariant sets in $X$.

Moreover, for every $x \in X$ and $\P$-almost every $\omega$ there exist a unique index $i = i(\omega, x) \in \{1,\ldots, d\}$ such that $f^n_\omega(x)$ \emph{equidistributes towards }$\nu_i$, that is
\begin{equation} \label{eq:convDebil}
	\frac{1}{N}\sum_{n = 0}^N 1_{f^n_\omega(x)} \xrightarrow[N \to \infty ]{} \nu_i
\end{equation}
in the weak $\ast$-topology.
\end{prop}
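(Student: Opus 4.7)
The plan is to prove the structural statement first (ergodic $\mu$-stationary measures are in bijection with minimal closed $G$-invariant sets in $X$, and there are finitely many of them), then deduce the pointwise equidistribution of orbits.

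For the structural part, the support of any $\mu$-stationary measure is automatically closed and $G$-invariant, and ergodicity of $\nu$ forces $\mathrm{supp}(\nu)$ to be minimal, since a proper closed invariant subset would give a nontrivial ergodic decomposition. Conversely, for any minimal closed $G$-invariant $M \subseteq X$, applying Markov-Kakutani to the convex weak-$\ast$ compact set of $\mu$-stationary probability measures supported on $M$ produces one, and extremality extracts an ergodic one. Finiteness is where local contraction is essential. For each minimal set $M$ I would define its basin $B(M) \subseteq X$ as the set of points $x$ for which $\P$-a.s.\ $f^n_\omega(x)$ accumulates only on $M$. Local contraction promotes single-point contraction to a fixed neighborhood: by monotone continuity of measure there exist $\epsilon(x) > 0$ and an event $E_x \subseteq \Omega$ of positive probability on which $\mathrm{diam}(f^n_\omega(B(x,\epsilon(x)))) \to 0$. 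On $E_x$, every $y \in B(x,\epsilon(x))$ has $f^n_\omega(y)$ sharing all accumulation points with $f^n_\omega(x)$, so each basin $B(M)$ is open. The basins are pairwise disjoint open subsets of the compact $X$, hence finite in number, and the same contraction-propagation argument shows that each minimal set supports only one ergodic stationary measure, giving the desired bijection.

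For the equidistribution, I view the orbit $X_n = f^n_\omega(x)$ as a Markov chain on $X$ with transition operator $P\phi(x) = \int_G \phi(gx)\,d\mu(g)$, whose invariant probability measures are exactly the $\mu$-stationary ones. Compactness of $X$ gives tightness of the empirical measures $L_N(\omega,x) = \frac{1}{N}\sum_{n=0}^{N-1}\delta_{X_n}$, and every weak-$\ast$ accumulation point is $\mu$-stationary. Combining Birkhoff on $(\Omega,\P)$ for the shift-invariant process $\omega \mapsto \phi(f^n_\omega(x))$ with martingale convergence for the bounded $P$-harmonic functions $h_\phi = \lim_n \frac{1}{n}\sum_{k=0}^{n-1} P^k \phi$, $\phi \in C(X)$, shows that $L_N$ converges $\P$-a.s.\ to a random stationary measure. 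Local contraction identifies this limit with a single $\nu_i$: on $E_x$ the orbit $f^n_\omega(x)$ eventually enters and stays in a single basin $B(M_i)$, so $L_N$ is asymptotically supported on $M_i$, and by uniqueness of the ergodic stationary measure there, the limit is $\nu_i$.

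The main obstacle, requiring the most care, is the finiteness step. The subtlety is that local contraction is a pointwise statement with $\omega$-dependent neighborhoods, so extracting a fixed contraction scale $\epsilon(x) > 0$ valid on a positive-probability event is crucial to pass to an open basin in $X$. A secondary delicate point is showing that distinct ergodic stationary measures have distinct supports: this relies again on local contraction propagating equidistribution from a single point to its contracting basin, forcing two ergodic measures sharing a minimal support to agree on a determining family of continuous test functions, and thus to coincide.
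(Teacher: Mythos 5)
The paper does not prove this proposition; it is quoted as a citation to Malicet \cite[Propositions 4.8 and 4.9]{malicet}, so there is no in-paper argument to compare against. On its own merits, your sketch has the right scaffolding (Markov--Kakutani for existence, openness of basins for finiteness, a Breiman-type random ergodic theorem for equidistribution), but two steps are genuinely incomplete.

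The assertion that ``ergodicity of $\nu$ forces $\mathrm{supp}(\nu)$ to be minimal, since a proper closed invariant subset would give a nontrivial ergodic decomposition'' does not hold on the stated grounds. Ergodicity of a stationary measure gives $\nu(M)\in\{0,1\}$ for every $G$-invariant Borel set $M$; the case $\nu(M)=1$ contradicts $M\subsetneq\mathrm{supp}(\nu)$, but $\nu(M)=0$ is perfectly compatible with $M$ being a nonempty proper closed $G$-invariant subset of the support, and then no nontrivial decomposition arises. Without local contraction, supports of ergodic stationary measures really need not be minimal, so this step too must use contraction, not ergodicity alone. Separately, the openness of the basins is not delivered by your argument as written: on the positive-probability event $E_x$ the orbits of $y\in B(x,\epsilon(x))$ and of $x$ share their accumulation points, so if $x\in B(M)$ this only shows that the orbit of $y$ accumulates on $M$ with \emph{positive} probability, whereas $y\in B(M)$ is an almost-sure statement. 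The event ``$f^n_\omega(y)$ accumulates only on $M$'' is not a tail event for the i.i.d.\ sequence $(f_{\omega_n})$ (changing $\omega_0$ changes the starting point and hence every iterate), so Kolmogorov's $0$--$1$ law does not apply; promoting positive to full probability is precisely where Malicet's argument uses a bounded-martingale argument for the harmonic function $x\mapsto\P[\text{orbit of }x\text{ converges towards }M]$, and your proposal needs to carry out some version of that step rather than elide it.
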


\subsection*{Groups acting on the circle}
We say that a group action $G \curvearrowright S^1$ is \emph{proximal} if for all $x, y \in S^1$ and $\epsilon > 0$ there exists $g \in G$ such that $d(g(x), g(y)) < \epsilon$. The action is said to be \emph{locally proximal} if it is not proximal and every $x \in S^1$ is the endpoint of an interval $I \subset S^1$ such that for all $\epsilon >0$ there exists $g \in G$ with $\mathrm{diam}(g(I))< \epsilon$. In this context, we say that a group action $G \curvearrowright^\phi S^1$ is \emph{semiconjugate} to $G \curvearrowright^\psi S^1$ if there exists a continuous surjection $\pi \colon S^1 \to S^1$ such that $\psi(g) \circ \pi = \pi \circ \phi(g)$ for all $g \in G$, and such that $\pi$ is locally non-decreasing and has degree one (this means that any lift of $\pi$ to $\widetilde{\pi}\colon \R \to \R$ is non-decreasing and satisfies $\widetilde{\pi}(x + 1) = \widetilde{\pi}(x) + 1$ for all $x \in \R$). When such a $\pi$ does not necessarily have degree 1, we call it a \emph{factor map}.

The following theorem is essentially equivalent to Theorem \ref{teo:margulis} from the introduction: in cases \eqref{item:class22} and \eqref{item:class23} below $G$ always contains a free group, and there exists an invariant probability measure for $G$ in cases \eqref{item:class1} (a mean of Dirac measures on a finite orbit on $S^1$) and \eqref{item:class21} (the image of Lebesgue measure under a conjugacy to a group of rotations).

\begin{teo}[see \cite{ghysCircle}] \label{teo:classification}
Consider an action $G \curvearrowright^\phi S^1$ by orientation-preserving homeomorphisms. Then exactly one of the following options is satisfied.
\begin{enumerate}
	\item \label{item:class1} There exists a finite orbit.
	\item \label{item:class2} There exists a unique closed minimal set $\Lambda$, which is either $S^1$ or a Cantor set. In the latter case, by collapsing the countably many connected components of $S^1 - \Lambda$ we can semiconjugate $\phi$ to a minimal group action $G \curvearrowright S^1$.
\end{enumerate}
Moreover, in the minimal case a further distinction exists: either
\begin{enumerate}[label = ii.\alph*]
	\item \label{item:class21} the action is free and thus conjugated to an action by rotations, or
	\item \label{item:class22} the action is proximal, or
	\item \label{item:class23} the action is locally proximal and not proximal, and there exists $d \in \N,\, d \geq 2$ and a continuous $d$-to-one covering $\pi \colon S^1 \to S^1$ that intertwines $\phi$ with a proximal action.
\end{enumerate}
\end{teo}

Thus whenever $G \curvearrowright^\phi S^1$ does not preserve a probability measure on $S^1$, there exists $d \in \N$ and a factor map $\pi \colon S^1 \to S^1$ that is $d$-to-one on the minimal set of $G$ (except for a countable number of points), and that intertwines $\phi$ with a proximal and minimal action. We will call this integer $d$ the \emph{degree of proximality} of $\phi$, but this notation is not standard.

\subsection*{Random walks on $S^1$}
In this subsection, fix a countable group $G$ and a nondegenerate probability measure $\mu$ on $G$. The random walk on $S^1$ defined by a proximal group action $G \curvearrowright^\phi S^1$ has been well studied.

\begin{teo}[{\cite[Appendice]{deroinKleptsynNavas}}] \label{teo:dkn}
Consider an action $G \curvearrowright S^1$ by orientation-preserving homeomorphisms with no invariant probability measure on $S^1$.
\begin{enumerate}
	\item \label{it:dkn1} There exists a unique $\mu$-stationary probability measure $\nu$ on $S^1$, which is atomless and is supported on the minimal set of $G$.
	\item \label{it:dkn2} If the action of $G$ on $S^1$ is proximal, there exists a random variable $\omega \in \Omega \mapsto \sigma(\omega) \in S^1$ such that for $\P$-almost every $\omega$ we have
	\[
		(f^n_\omega)^{-1} \nu \xrightarrow[n\to \infty]{} 1_{\sigma(\omega)}
	\] in the weak-$\ast$ topology.
\end{enumerate}
\end{teo}

We call the random variable $\sigma(\omega)$ from the previous theorem the \emph{repulsor} of the random walk $(f^n_\omega)_{n \in \N}$. Its distribution is the unique $\overline{\mu}$-stationary measure on $S^1$ where $\overline{\mu} \in \mathrm{Prob}(G)$ is defined on $g \in G$ as $\mu(g^{-1})$, and is thus nonatomic. 

Notice that the measure $(f^n_\omega)^{-1}\nu$ is given by $\nu(f^n_\omega(I))$ on every interval $I \subseteq S^1$, so the statement in \eqref{it:dkn2} says that $f^n_\omega(I)$ is contracted into a $\nu$-null set unless $I$ contains $\sigma(\omega)$, in which case it is expanded to the whole circle. As a consequence, for all $x, y \in S^1$ we have $\P$-almost surely that $x$ and $y$ are not $\sigma(\omega)$ since the law of $\sigma(\omega)$ is nonatomic, and hence $\lim_{n \to \infty} d(f^n_\omega(x), f^n_\omega(y)) = 0$. This conclusion is the subject of \cite{kleptsynNalskii2004} (see also \cite{antonov}), and justifies the fact that we will use $f^n_\omega(0)$ (or $f^n_\omega(x)$ for some nonrandom $x \in S^1$) as an ``attractor'' for $f^n_\omega$ in the proofs below.

When $G \curvearrowright S^1$ is not necessarily proximal a similar statement holds, and even more is true: the rate of contraction of $f^n_\omega(I)$ when $\sigma(\omega) \not \in I$ is exponential.

\begin{teo} \label{teo:MKKO}
Consider an action $G \curvearrowright S^1$ by orientation-preserving homeomorphisms with no invariant probability measure on $S^1$. Let $d \in \N$ be the degree of proximality of $G \curvearrowright S^1$.

There exist measurable functions $\sigma_1,\ldots, \sigma_d \colon \Omega \to S^1$ such that the following hold.
\begin{enumerate}
	\item \label{it:MKKO1} \cite[Theorem A]{malicet} There exists a $\lambda > 0$ such that for $\P$-almost every $\omega$ and every closed interval $I \subset S^1 \setminus \{\sigma_1(\omega), \ldots, \sigma_d(\omega)\}$ we have 
	\[
		\mathrm{diam}(f^n_\omega(I)) \leq e^{-\lambda n}
	\] for sufficiently large $n \in \N$.
	\item \label{it:MKKO3} \cite[Proposition 4.3]{malicet} $\P$-almost surely, the set $\{\sigma_1(\omega),\ldots, \sigma_d(\omega)\}$ has size $d$.
\end{enumerate}
\end{teo}

The random set $\{\sigma_1,\ldots, \sigma_d\}$ from the previous theorem is called the \emph{repelling set} of the random walk $(f^n_\omega)_{n \in \N}$. In this setting, let $\pi \colon S^1 \to S^1$ be a factor map to a minimal and proximal action and $\Lambda \subseteq S^1$ be the minimal set of $G \curvearrowright S^1$.  Define $E \subset S^1$ as the countable set of images of connected components of $S^1 \setminus \Lambda$, so for all $x \in S^1 \setminus E$, the fiber $\pi^{-1}(x)$ has size $d$. Denote by $\sigma(\omega)$ the repulsor of the random walk in the image of $\pi$. If $\sigma(\omega) \in S^1 \setminus E$ (which happens $\P$-almost surely, since the distribution of $\sigma$ is nonatomic), then $\pi^{-1}(\sigma(\omega)) = \{\sigma_1(\omega),\ldots. \sigma_d(\omega)\}$ by the defining properties of $\sigma(\omega)$ and $\{\sigma_1(\omega),\ldots, \sigma_d(\omega)\}$. We record this as a proposition.

\begin{prop} \label{prop:repellingSet}
Consider an action $G \curvearrowright S^1$ by orientation-preserving homeomorphisms with no invariant probability measure on $S^1$. Denote by $\pi \colon S^1 \to S^1$ a factor map to a minimal and proximal action, and by $\omega \mapsto \sigma(\omega)$ the repulsor of the random walk induced in the image of $\pi$.

Then $\P$-almost surely, the repelling set $F(\omega) = \{\sigma_1(\omega),\ldots, \sigma_d(\omega)\}$ of $(f^n_\omega)_{n \in \N}$ satisfies $\pi^{-1}(\sigma(\omega)) = F(\omega)$.
\end{prop}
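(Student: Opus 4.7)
The plan is to formalize the remark immediately preceding the statement. First, I would note that the distribution of the downstairs repulsor $\sigma$ is atomless: this can be read off from Theorem \ref{teo:MKKO} \eqref{it:MKKO2} (the laws of $\sigma_i$ upstairs are absolutely continuous with respect to $\overline{\nu}$, which is atomless by Theorem \ref{teo:dkn} \eqref{it:dkn1}) or, more directly, by applying Theorem \ref{teo:dkn} to the proximal action on the quotient circle. Since $E$ is countable, this gives $\P$-almost surely $\sigma(\omega) \notin E$, so $\pi^{-1}(\sigma(\omega))$ consists of exactly $d$ points, none of which is an endpoint of a connected component of $S^1 \setminus \Lambda$. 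Combined with the almost sure equality $\abs{F(\omega)} = d$ from Theorem \ref{teo:MKKO} \eqref{it:MKKO3}, the statement reduces to proving the inclusion $\pi^{-1}(\sigma(\omega)) \subseteq F(\omega)$ for $\P$-almost every $\omega$.

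Suppose, towards a contradiction, that for some generic $\omega$ there is $x \in \pi^{-1}(\sigma(\omega)) \setminus F(\omega)$. I would pick a closed interval $I \ni x$ upstairs disjoint from $F(\omega)$. Because $x$ is not a gap endpoint of $\Lambda$ and $\pi$ is locally nondecreasing, the image $\pi(I)$ is a closed arc of the quotient circle containing $\sigma(\omega)$ in its \emph{interior}. Theorem \ref{teo:MKKO} \eqref{it:MKKO1} gives $\mathrm{diam}(f^n_\omega(I)) \to 0$ exponentially, and since $\pi$ is continuous and intertwines the upstairs and downstairs actions, one also has $\mathrm{diam}(f^n_\omega(\pi(I))) = \mathrm{diam}(\pi(f^n_\omega(I))) \to 0$.

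On the other hand, Theorem \ref{teo:dkn} \eqref{it:dkn2} applied downstairs gives $(f^n_\omega)^{-1}\nu \to 1_{\sigma(\omega)}$ weakly, where $\nu$ is the unique $\mu$-stationary measure on the quotient circle. As $\sigma(\omega)$ sits in the interior of $\pi(I)$, this forces $\nu(f^n_\omega(\pi(I))) = ((f^n_\omega)^{-1}\nu)(\pi(I)) \to 1$. But $\nu$ is atomless by Theorem \ref{teo:dkn} \eqref{it:dkn1}, and $f^n_\omega(\pi(I))$ is a shrinking arc, so its $\nu$-mass must tend to $0$. This contradicts the previous limit, establishing $\pi^{-1}(\sigma(\omega)) \subseteq F(\omega)$; equality then follows by comparing cardinalities.

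The main subtlety I anticipate is ensuring that $\pi(I)$ is a genuine two-sided neighborhood of $\sigma(\omega)$ rather than merely an arc abutting it from one side — this is what fails if $x$ is an endpoint of a gap of $\Lambda$ collapsed by $\pi$. The atomlessness of the distribution of $\sigma$ is precisely what excludes this countable obstruction. Once this geometric point is granted, the contradiction is a clean interplay between the upstairs exponential contraction provided by Theorem \ref{teo:MKKO} and the downstairs weak convergence to a Dirac mass at the repulsor from Theorem \ref{teo:dkn}.
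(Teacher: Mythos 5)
Your argument is correct and is essentially the one the paper has in mind: it formalizes the step the paper dispatches with ``by the defining properties of $\sigma(\omega)$ and $\{\sigma_1(\omega),\ldots,\sigma_d(\omega)\}$.'' The reduction to the single inclusion $\pi^{-1}(\sigma(\omega))\subseteq F(\omega)$ by cardinality, followed by the contradiction between upstairs exponential contraction of an interval avoiding $F(\omega)$ (Theorem~\ref{teo:MKKO}~\eqref{it:MKKO1}) and the downstairs weak-$\ast$ convergence $(f^n_\omega)^{-1}\nu\to 1_{\sigma(\omega)}$ from Theorem~\ref{teo:dkn}~\eqref{it:dkn2} --- which forces full $\nu$-mass onto arcs of vanishing diameter, contradicting atomlessness of $\nu$ --- is exactly the missing content.

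One caveat: the first route you propose for nonatomicity of the law of $\sigma$ (absolute continuity of the upstairs $\sigma_i^\ast\P$ with respect to $\overline{\nu}$) is circular as stated, since transferring that information to the downstairs $\sigma$ via $\pi$ presupposes $\pi(\sigma_i)=\sigma$, which is the content of the proposition. Your second route is the right one: apply Theorem~\ref{teo:MKKO}~\eqref{it:MKKO2} with $d=1$ to the quotient proximal action, so that $\sigma^\ast\P$ equals the inverse stationary measure on the quotient circle, which is atomless by Theorem~\ref{teo:dkn}~\eqref{it:dkn1}; this is what the paper's citation intends.
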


We finish with the Hölder regularity of the unique $\mu$-stationary measure of a proximal random dynamical system $(G, \mu) \curvearrowright S^1$. The original statement in \cite[Theorem 2.3]{holderMeasure} is written for $G \leq \mathrm{Diff}^1(M)$ for any compact smooth manifold $M$, but \cite[Remark 2.10]{holderMeasure} shows that differentiability of the maps in $G$ is not essential: what is truly needed is that all maps of $G$ be biLipschitz.

\begin{teo}[{\cite[Theorem 2.3]{holderMeasure}}] \label{teo:holder}
Consider an action $G \curvearrowright S^1$ by orientation-preserving diffeomorphisms of class $C^1$ with no invariant probability measure on $S^1$, and assume that for some $\delta >0$ the integral
\begin{equation*} 
	\int_{G} \max\left\{\abs{g}_{\mathrm{Lip}}, \abs{g^{-1}}_{\mathrm{Lip}}\right\}^\delta \dd \mu(g)
\end{equation*} is finite.

Then there exist $C, \alpha> 0$ such that any $\mu$-stationary probability  measure $\nu$ on $S^1$ is \emph{$(C,\alpha)$-Hölder continuous}, that is, $\nu(B(x,r)) \leq Cr^\alpha$ for all $x \in S^1$ and $r > 0$.
\end{teo}

\section{Probabilistic Tits alternative in $\mathrm{Homeo}_+(S^1)$}

\begin{proof}[\textbf{Proof of Theorem \ref{teo:b}}] Fix $\mu_1, \, \mu_2$ two nondegenerate probability measures on countable subgroups $G_1,\, G_2$ of $\mathrm{Homeo}_+(S^1)$ that do not preserve a measure on $S^1$. For $i=1,2$, let
\begin{itemize}
	\item $\nu_i$ be the unique $\mu_i$-stationary measure on $S^1$ and $\Lambda_i \subseteq S^1$ the minimal set of $G_i$,
	\item $d_i \in \N$ be the degree of proximality of $G_i \curvearrowright S^1$, and
	\item $\pi_i \colon S^1 \to S^1$ be a factor map of $G_i \curvearrowright S^1$ to a minimal proximal action of $G_i$ such that $\pi_i$ is $d_i$-to-one $\nu_i$-almost everywhere.
\end{itemize} 
 
If $\omega \in \Omega_1$, we write
\begin{itemize}
	\item $(g^n_\omega)_{n \in \N}$ for the random walk driven by $\mu_1$ acting on the image of $\pi_1$ and $\sigma(\omega) \in S^1$ its repelling point, and
	\item $F(\omega) \subset S^1$ for the repelling set of the random walk $(f^n_\omega)_{n \in \N}$.
\end{itemize}
\noindent
When $\omega' \in \Omega_2$ we denote by $g^n_{\omega'},\, \sigma(\omega')$ and $F(\omega')$ the same objects associated to $\mu_2$.
 
Fix once and for all $x \in S^1,\, y \in S^1$ such that $\pi_1^{-1}(x)$ and  $\pi_2^{-1}(y)$ have size $d_1,d_2$ respectively and are disjoint.
\begin{claim}
The following properties are true for $\P_1 \otimes \P_2$-almost every $(\omega, \omega') \in \Omega_1 \times \Omega_2$.
\begin{enumerate}
	\item \label{it:1} The sequence $\{(f^n_\omega(a), f^n_{\omega'}(b))\}_{n \in \N} \subset S^1 \times S^1$ equidistributes with respect to $\nu_1 \otimes \nu_2$ for every $a \in \pi_1^{-1}(x)$ and $b \in \pi_2^{-1}(y)$.
	\item \label{it:2} The equalities $F(\omega) = \pi_1^{-1}(\sigma(\omega))$ and $F(\omega') = \pi_2^{-1}(\sigma(\omega'))$ hold.
	\item \label{it:3} The sets $F(\omega),\, F(\omega'),\, \pi_1^{-1}(x)$ and $\pi_2^{-1}(y)$ are pairwise disjoint.
\end{enumerate}
\end{claim}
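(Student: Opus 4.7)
The plan is to handle the three items of the claim in order.

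Item (2) is immediate: apply Proposition \ref{prop:repellingSet} separately to $(G_1,\mu_1)$ and $(G_2,\mu_2)$.

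For item (1), I would study the product random walk $(G_1 \times G_2, \mu_1 \otimes \mu_2)$ acting diagonally on $S^1 \times S^1$ and apply Proposition \ref{prop:medidasEstacionarias} to it. Three facts need to be verified. First, the product system is locally contracting: by Theorem \ref{teo:MKKO}\eqref{it:MKKO1} applied to each factor, for $\P_1 \otimes \P_2$-a.e.\ $(\omega, \omega')$ any point $(a,b)$ admits a product neighborhood $B \times B'$ whose diameter under $f^n_\omega \times f^n_{\omega'}$ tends to $0$. Second, $\nu_1 \otimes \nu_2$ is $\mu_1 \otimes \mu_2$-stationary by a one-line Fubini computation. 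Third, $\Lambda_1 \times \Lambda_2$ is the unique minimal closed $(G_1 \times G_2)$-invariant subset of $S^1 \times S^1$; this uses that each $G_i$ acts minimally on $\Lambda_i$ so that the orbit of any $(p,q) \in \Lambda_1 \times \Lambda_2$ is dense in $\Lambda_1 \times \Lambda_2$, and that every $G_i$-orbit in $S^1$ accumulates on $\Lambda_i$ so that any candidate minimal set contains $\Lambda_1 \times \Lambda_2$. These three facts, combined with Proposition \ref{prop:medidasEstacionarias}, imply that $\nu_1 \otimes \nu_2$ is the unique ergodic stationary measure of the product system and that every starting point equidistributes towards it almost surely. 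Intersecting the corresponding full-measure events over the finite set $\pi_1^{-1}(x) \times \pi_2^{-1}(y)$ finishes item (1).

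Item (3) rests on the fact that the laws of $\sigma(\omega)$ and $\sigma(\omega')$ on $S^1$ are nonatomic. This follows from Theorem \ref{teo:MKKO}\eqref{it:MKKO2} applied to the proximal induced systems on the images of $\pi_i$ (with $d = 1$): the law of each $\sigma$ coincides with the stationary measure of the reversed walk on that image, which has no finite orbits and hence is nonatomic by Theorem \ref{teo:dkn}\eqref{it:dkn1}. Combined with part (2), the disjointness $F(\omega) \cap \pi_1^{-1}(x) = \emptyset$ is equivalent to $\sigma(\omega) \neq x$, and $F(\omega) \cap \pi_2^{-1}(y) = \emptyset$ to $\sigma(\omega) \notin \pi_1(\pi_2^{-1}(y))$; both hold almost surely since $\sigma(\omega)$ avoids every fixed finite set with probability one, and the symmetric statements for $\omega'$ follow identically. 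Finally, for $F(\omega) \cap F(\omega') = \emptyset$ one conditions on $\omega'$: the set $\pi_1(F(\omega'))$ is almost surely finite, and by independence and nonatomicity of $\sigma(\omega)$ it is avoided by $\sigma(\omega)$ with full $\P_1$-probability; Fubini then concludes.

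The only substantive point is the uniqueness of the product minimal set needed in (1); once this is checked, every other step is bookkeeping with the tools recalled in the Preliminaries.
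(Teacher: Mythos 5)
Your argument matches the paper's proof in all three items: item (2) is Proposition \ref{prop:repellingSet} applied to each factor; item (1) applies Proposition \ref{prop:medidasEstacionarias} to the product system $(G_1\times G_2,\mu_1\otimes\mu_2)\curvearrowright S^1\times S^1$, after checking local contraction, stationarity of $\nu_1\otimes\nu_2$, and uniqueness of the minimal set $\Lambda_1\times\Lambda_2$ exactly as the paper does; and item (3) reduces to showing $\P_1[z\in F(\omega)]=\P_2[z\in F(\omega')]=0$ for fixed $z\in S^1$ via independence/Fubini and nonatomicity of the law of $\sigma$, which is also the paper's route. The only differences are cosmetic: you spell out why the law of $\sigma$ is nonatomic and why $\Lambda_1\times\Lambda_2$ is the unique minimal set, points the paper treats as immediate.
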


\begin{proof}[Proof of the claim:]
\eqref{it:1} The random dynamical system $(G_1 \times G_2, \mu_1 \otimes \mu_2) \curvearrowright S^1 \times S^1$ is locally contracting since $(G_1, \mu_1) \curvearrowright S^1$, $(G_2, \mu_2) \curvearrowright S^1$ are locally contracting. Moreover, for $(x,y) \in S^1 \times S^1$ the orbit $\mathrm{Orb}_{G_1\times G_2}((x,y))$ accumulates on $\Lambda_1 \times \Lambda_2$, so $\Lambda_1 \times \Lambda_2$ is the unique $G_1 \times G_2$-minimal set and Proposition \ref{prop:medidasEstacionarias} shows that $S^1 \times S^1$ has a unique $\mu_1 \otimes \mu_2$-stationary measure, namely $\nu_1 \otimes \nu_2$. Again Proposition \ref{prop:medidasEstacionarias} gives equidistribution $\P_1 \otimes \P_2$-almost surely.

\noindent
\eqref{it:2} This is Proposition \ref{prop:repellingSet}.

\noindent
\eqref{it:3} By independence it suffices to show that $\P_1\left[ z \in F(\omega) \right] = \P_2\left[ z \in F( \omega')\right] = 0$ for any fixed $z \in S^1$, but this follows from $\P_1\left[ z \in F(\omega)\right] = \P_1\left[ \pi_1(z) = \sigma(\omega) \right]$ and the fact that the distribution of $\sigma(\omega)$ is nonatomic.
\end{proof}

We will assume in what follows that $(\omega, \omega') \in \Omega_1\times \Omega_2$ satisfy the previous properties.

Fix an $\epsilon > 0$ and pick $\delta > 0$ such that any interval $I \subset S^1$ with $\abs{I} \leq \delta$ has $\nu_1(I), \nu_2(I) \leq \epsilon$ and also $\nu_1\otimes \nu_2 (D^{\delta}) \leq \epsilon$ where $D \subset S^1\times S^1$ is the diagonal (here $S^1 \times S^1$ is equipped with the $l^\infty$-metric).  Let $\chi = \chi(\omega,\omega') \in (0,\delta/2)$ such that $F(\omega)^\chi$ and $F(\omega')^\chi$ are disjoint.

Suppose that $I\subset S^1$ has diameter at most $\chi$ and $a \in \pi_1^{-1}(x), \, b \in \pi_2^{-1}(y)$. Equidistribution implies that the quantities 
\begin{align} \label{eq:upper}
	\varlimsup_{N \to \infty}\frac{1}{N}\abs{\{0\leq n < N \mid f^n_\omega(a) \in I\}},\quad & \varlimsup_{N \to \infty}\frac{1}{N}\abs{\{0\leq n < N \mid f^n_{\omega'}(b) \in I\}}\\ 
	\text{ and }\qquad \varlimsup_{N \to \infty}\frac{1}{N}|\{0\leq n < N \mid & (f^n_\omega(a),f^n_{\omega'}(b)) \in D^\chi \}| \nonumber
\end{align} are all smaller than $\epsilon$. By considering the different combinations in which the intervals in $f^n_\omega(\pi_1^{-1}(x))^\chi$, $f^n_{\omega'}(\pi_2^{-1}(y))^{\chi}$, $F(\omega)^\chi$ and $ F(\omega')^\chi$ can intersect, we conclude that
\[
	\varlimsup_{N \to \infty}\frac{1}{N}\abs{\{0\leq n < N \mid f^n_\omega(\pi_1^{-1}(x))^\chi,\, f^n_{\omega'}(\pi_2^{-1}(y))^\chi, \, F(\omega)^\chi \text{ and } F(\omega')^\chi \text{ are not pairwise disjoint}\}}
\] is at most $(d_1^2 + d_2^2 + 3d_1d_2)\epsilon$.

Let $\bar{\chi}>0$ such that for $i = 1,2$, the connected components of $\pi_i^{-1}(I)$ have diameter at most $\chi$ if $I \subset S^1$ has diameter at most $\bar{\chi}$. By Theorem \ref{teo:dkn}, $\P_1 \otimes \P_2$-almost surely we can find an $n_0 = n_0(\omega, \omega') \in \N$ such that for all $n \geq n_0$ the inclusions
\[
	g^n_\omega(S^1 - \sigma(\omega)^{\bar{\chi}}) \subseteq g^n_\omega(x)^{\bar{\chi}} \quad \text{ and }\quad g^n_{\omega'}(S^1 - \sigma(\omega')^{\bar{\chi}}) \subseteq g^n_{\omega'}(y)^{\bar{\chi}}
\] hold, so $F(\omega) = \pi_1^{-1}(\sigma(\omega)),\, F(\omega') = \pi_2^{-1}(\sigma(\omega'))$ shows that
\begin{equation} \label{eq:incl}
	f^n_\omega(S^1 - F(\omega)^{\chi}) \subseteq f^n_\omega(\pi_1^{-1}(x))^\chi \quad \text{ and }\quad f^n_{\omega'}(S^1 - F(\omega')^{\chi}) \subseteq f^n_{\omega'}(\pi_2^{-1}(y))^\chi.
\end{equation} Equation \eqref{eq:incl} implies that every $n$ in the set
\[
	\cN = \{ n\geq n_0 \mid f^n_\omega(\pi_1^{-1}(x))^\chi,\, f^n_{\omega'}(\pi_2^{-1}(y))^\chi, \, F(\omega)^\chi \text{ and } F(\omega')^\chi \text{ are pairwise disjoint}\}
\] is such that $f^n_\omega, f^n_{\omega'}$ are a ping-pong pair. Thus
\begin{align*}
	\varliminf_{N \to \infty}\frac{1}{N}\abs{\{0\leq n < N \mid f^n_\omega,\, f^n_{\omega'} \text{ are a ping-pong pair}\}} & \geq \varliminf_{N \to \infty} \frac{1}{N}\abs{\cN\cap [0,N]} \\
	& \geq 1-(d_1^2 + d_2^2 + 3d_1d_2)\epsilon,
\end{align*} and since $\epsilon > 0$ was arbitrary the conclusion follows.
\end{proof}

\section{Probabilistic Tits alternative in $\mathrm{Diff}^1_+(S^1)$}
\subsection*{Preliminary statements} In this subsection $G$ is a countable subgroup of $\mathrm{Diff}^1_+(S^1)$ that acts proximally on $S^1$ and $\mu$ is a nondegenerate probability measure on $G$ such that
\begin{equation} \label{eq:momentCondition} 
	\text{there exists }\delta > 0 \text{ so that }\int_{G} \max\left\{\abs{g}_{\mathrm{Lip}}, \abs{g^{-1}}_{\mathrm{Lip}}\right\}^\delta \dd \mu(g) \text{ is finite.} \tag{M}
\end{equation}

We now state and prove Theorem \ref{teo:mean}, which gives (uniform) exponential contractions in mean in our context. It is a variation on similar statements that have appeared independently in \cite[Proposition 4.18]{KG} and \cite[Theorem 1.3]{GScontracting}, assuming that $\mu$ has finite support in $\mathrm{Diff}^1_+(S^1)$. The proof follows \cite[Proposition 4.18]{KG} closely, along with additional input from \cite[Proposition 4.5]{BarrientosMalicet2024}. This is the only point in the proof where we use that $\mu$ is supported in $\mathrm{Diff}^1_+(S^1)$, namely to obtain inequality \eqref{eq:meanLocal} below.

\begin{teo}[{\cite[Proposition 4.5]{BarrientosMalicet2024}}] \label{teo:meanLocal}
If $\mu$ satisfies the condition \eqref{eq:momentCondition}, there exist $r,\lambda > 0,\, s_0 \in (0,1]$ and $k \in \N_+$ such that
\begin{equation} \label{eq:meanLocal}
	\E\left[ d(f^{k_1}_\omega(x), f^{k_1}_\omega(y))^s \right] \leq e^{-\lambda} d(x,y)^s
\end{equation} for all $x,y \in S^1$ such that $d(x,y) \leq r$ and all $s \in (0, s_0]$.
\end{teo}

\begin{teo} \label{teo:mean}
There exists $\lambda_+ > 0,\, s \in (0,1]$ and $N \in \N$ such that for all $n \geq N$ we have 
\[
	\sup_{x\neq y \in S^1} \E\left[ \frac{d(f^n_\omega(x), f^n_\omega(y))^s}{d(x,y)^s}\right] \leq e^{-\lambda_+ n}.
\]
    
In particular, for all $n \geq N$ we have
\[
	\sup_{x,y \in S^1} \E\left[ d(f^n_\omega(x), f^n_\omega(y)) \right] \leq e^{-\lambda_+ n}
\]
\end{teo}

\begin{proof}
Let $r, \lambda > 0$, $s_0 \in (0,1]$ and $k_1 = k \in \N_+$ given by Theorem \ref{teo:meanLocal}, so \eqref{eq:meanLocal} holds for all $x,y \in S^1$ with $d(x,y) \leq r$ and all $s \in (0,s_0]$.
\begin{claim}
For every $\epsilon_1, \epsilon_2 > 0$ there exists $k_2 \in \N_+$ such that
\[
	\P\left[ d(f^{k_2}_\omega(x), f^{k_2}_\omega(y)) < \epsilon_1 \right] > 1 - \epsilon_2
\] for all $x,y \in S^1$.
\end{claim}

\begin{proof}[Proof of the claim]
This is \cite[Lemma 4.23]{KG}, but we give the proof for completeness.

Let $l \in \N_+$ be large enough so that the points $x_j = j/l \in S^1,\, 0 \leq j \leq l-1$ satisfy
\[
	\P\left[\sigma(\omega) \in (x_j, x_{j+1})\right] \leq \epsilon_2/4
\] for each $j$. When $0 \leq j \leq l-1$ denote by $I_j$ the open interval with endpoints $x_j,\, x_{j+1}$ and length $1 - 1/l$. Choose $k_2 \in \N$ large enough so that for every $0 \leq j \leq l-1$, we have
\[
	\P\left[ \mathrm{diam}(f^n_\omega(I_j)) \leq \epsilon_1 \mid \sigma(\omega) \in (x_j, x_{j+1})\right] \geq 1- \epsilon_2/2.
\]

If $x, y \in S^1$, then $x,y \in I_j$ for all indices $0\leq j \leq l-1$ except at most two values $j_1, j_2$. Thus
\begin{align*}
	\P\left[ d(f^{k_2}_\omega(x), f^{k_2}_\omega(y)) < \epsilon_1 \right] & \geq \P\left[ x, y \in I_{j(\omega)} \text{ where } \sigma(\omega) \in (x_{j(\omega)}, x_{j(\omega) + 1}) \right]\\
									      & \geq \sum_{\substack{j = 0\\ j \neq j_1,j_2}}^{l-1}\P\left[ \mathrm{diam}(f^n_\omega(I_j)) \leq \epsilon_1 \mid \sigma(\omega) \in (x_j, x_{j+1})\right]\P\left[\sigma(\omega) \in (x_j, x_{j+1}) \right] \\ 
									      & \geq (1 - \epsilon_2/2)(1 - 2 \epsilon_2/4) = (1 - \epsilon_2/2)^2 \geq 1 - \epsilon_2. \qedhere
\end{align*}
\end{proof}

Fix $s \in (0,s_0]$ and find $k_2 \in \N_+$ such that
\[
	\P\left[ d(f^{k_2}_\omega(x), f^{k_2}_\omega(y)) < r/4^{1/s} \right] > 1 - r^s/4
\] for all $x,y \in S^1$. If we take $x,y \in S^1$ such that $d(x,y) \geq r$, then
\begin{align*}
	\E\left[ d(f^{k_2}_\omega(x), f^{k_2}_\omega(y))^s \right]  & \leq \P\left[ d(f^{k_2}_\omega(x), f^{k_2}_\omega(y)) > r/4^{1/s} \right] + \left(\frac{r}{4^{1/s}}\right)^s P\left[ d(f^{k_2}_\omega(x), f^{k_2}_\omega(y)) < r/4^{1/s} \right] \\
	& \leq \frac{r^s}{4} + \frac{r^s}{4}  \leq \frac{1}{2} d(x,y)^s.
\end{align*}

For every $k \in \N_+$ and $x,y \in S^1$, define a random variable $K_{k}(\omega) \in \N_+$ (which depends on $x,y$) as follows: if $d(x,y) \leq r$ (resp. $d(x,y) > r$) apply $k_1$ (resp. $k_2$) random iterations of $\omega = (f_{\omega_n})_{n \in \N}$ to the pair $x,y$. Repeat the process on the pair $f^{k_1}_\omega(x), f^{k_1}_\omega(y)$ (resp. $f^{k_2}_\omega(x), f^{k_2}_\omega(y)$) applying iterations of $(f_{\omega_{n + k_1}})_{n \in \N}$ (resp. $(f_{\omega_{n + k_2}})_{n \in \N}$)  until the total number of iterations exceeds $k$ for the first time. By definition we have $k \leq K_{k} \leq k +\max\{k_1,k_2\}$, and by the strong Markov property we see that
\begin{equation} \label{eq:randomContraction}
	\E\left[ d\left(f^{K_{k}(\omega)}_\omega(x), f^{K_{k}(\omega)}_\omega(y)\right)^s \right] \leq  \max\left\{\left(\frac{1}{2}\right)^{\frac{k}{k_2}}, e^{-\frac{k}{k_1}\lambda}\right\}d(x,y)^s.
\end{equation}
	
The $f_{\omega_i}$ are independent and distributed along $\mu$, and hence
\begin{align} \label{eq:indep} \nonumber
	\E\left[\abs{\left(f_{\omega_{K_k}} \circ \cdots \circ f_{\omega_k}\right)^{-1} }^{s}_\mathrm{Lip}\right] & \leq \E\left[ \abs{f_{\omega_{K_k}}^{-1}}^s_\mathrm{Lip} \cdots \abs{f_{\omega_k}^{-1}}^s_\mathrm{Lip}\right] \\ 
	\nonumber  & \leq \E \left[ \abs{f_{\omega_{k + \max\{k_1,k_2\} }}^{-1}}^s_\mathrm{Lip} \cdots \abs{f_{\omega_k}^{-1}}^s_\mathrm{Lip}\right] \\ & = \int_G \abs{g^{-1}}^{s\max\{k_1, k_2\}}_\mathrm{Lip} \dd \mu(g).
\end{align} We deduce that 
\begin{align*}
	\E\left[ d(f^{k}_\omega(x), f^{k}_\omega(y))^{s/2} \right] & \leq \E\left[\abs{\left(f_{\omega_{K_k}} \circ \cdots \circ f_{\omega_k}\right)^{-1} }^{s/2}_\mathrm{Lip} d\left(f^{K_{k}(\omega)}_\omega(x), f^{K_{k}(\omega)}_\omega(y)\right)^{s/2}\right] \\
	& \leq \E\left[\abs{\left(f_{\omega_{K_k}} \circ \cdots \circ f_{\omega_k}\right)^{-1} }^{s}_\mathrm{Lip}\right]^{1/2} \E\left[ d\left(f^{K_{k}(\omega)}_\omega(x), f^{K_{k}(\omega)}_\omega(y)\right)^{s}\right]^{1/2} \\
	& \leq \left(\int_G \abs{g^{-1}}^{s\max\{k_1,k_2\}}_\mathrm{Lip} \dd \mu(g) \right)^{1/2} \max\left\{\left(\frac{1}{2}\right)^{\frac{k}{2 k_2}}, e^{-\frac{k}{2 k_1}\lambda}\right\}d(x,y)^{s/2},
\end{align*} where we have used \eqref{eq:randomContraction} and \eqref{eq:indep} in the last inequality. If $s \leq \delta/\max\{k_1,k_2\}$, where $\delta > 0$ is provided by the condition \eqref{eq:momentCondition}, the term $\int_G \abs{g^{-1}}^{s\max\{k_1,k_2\}}_\mathrm{Lip} \dd \mu(g)$ is also finite. But the term $\max\left\{\left(\frac{1}{2}\right)^{\frac{k}{2 k_2}}, e^{-\frac{k}{2 k_1}\lambda}\right \} $ converges to 0 as $k \to \infty$, and hence there exists $k \in \N_+$ and $\lambda > 0$ such that
\[
	\E\left[ d(f^k_\omega(x), f^k_\omega(y))^s\right] \leq e^{-\lambda} d(x,y)^s
\] for all $x,y \in S^1$ and $ 0 < s \leq \min\{s_0/2, \delta/(2k_1), \delta/(2k_2)\}$. By the Markov property, for all $n \in \N$ we have
\[
	\E\left[ d(f^n_\omega(x), f^n_\omega(y))^s\right] \leq e^{-\lambda \floor{n/k}} d(x,y)^s \leq e^{\lambda} e^{-\lambda n/k}d(x,y)^s.
\] The conclusion follows by setting $\lambda_+ = \lambda/(2k)$ and choosing $N \in \N$ so that $e^{\lambda} e^{-\lambda N /(2k)} < 1$.
\end{proof}
 
\begin{rmk}
The previous theorem says that $(G,\mu) \curvearrowright S^1$ is \emph{$\mu$-contracting}, according to the terminology of Benoist-Quint in \cite[Section 11.1]{BQbook}. As a consequence, all of the limit laws available for cocycles in this setting (that is, the central limit theorem, the law of the iterated logarithm and large deviations estimates, see \cite[Section 12.1]{BQbook}) hold in this setting. In particular the Lyapunov cocycle $(g,x) \mapsto \log g'(x)$ satisfies these limit laws. This recovers \cite[Theorem 1.14]{gelfertSalcedo2024}, for instance. However, \cite[Theorem 12.1]{BQbook} requires the cocycle to be Lipschitz with integrable Lipschitz constant, but the proofs go through without relevant changes if the Lipschitz condition is replaced by a $\tau$-Hölder one.
\end{rmk}

Denote by $(\overline{f}^n_\omega)_{n \in \N}$ the \emph{left (or inverse) random walk} $\overline{f}^n_\omega = f_{\omega_0} \circ f_{\omega_1} \circ \cdots f_{\omega_n}$. Define the random variable $T(\omega) \in S^1$ as the repulsor of the random walk $\left(f_{\omega_n}^{-1} \circ f_{\omega_{n-1}}^{-1} \circ \cdots \circ f_{\omega_0}^{-1}\right)_{n \in \N}$. The following is an analogue of \cite[Theorem 4.16]{aoun}.
\begin{prop} \label{prop:convExp}
Let $\lambda_+ > 0$ be the constant provided by Theorem \ref{teo:mean}. There exists $\lambda_- > 0,\, N \in \N$ such that
\begin{equation} \label{eq:convExpEn}
	\sup_{x \in S^1}\E\left[d\left((f^n_\omega)^{-1}(x), \sigma(\omega)\right)\right] \leq e^{-\lambda_- n}
\end{equation} and 
\begin{equation} \label{eq:convExpEn2}
	\sup_{x \in S^1} \E \left[ d\left(\overline{f}^n_\omega(x), T(\omega)\right) \right] \leq e^{-\lambda_+ n}
\end{equation} hold for $n \geq N$. Moreover, there exist $C_-, \alpha_-> 0$ such that the distribution of $T$ is $(C_-,\alpha_-)$-Hölder continuous.
\end{prop}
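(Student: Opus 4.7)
The plan is to prove \eqref{eq:convExpEn2} via a Cauchy/telescoping argument based on Theorem \ref{teo:mean}, obtain \eqref{eq:convExpEn} by the analogous argument applied to $\overline{\mu}$, and deduce the Hölder regularity of $T_{\ast}\P$ by directly identifying this law. Write $h^n_\omega := f_{\omega_n}^{-1} \circ \cdots \circ f_{\omega_0}^{-1}$, so that $\overline{f}^n_\omega = (h^n_\omega)^{-1}$ and $T(\omega)$ is the repulsor of the $\overline{\mu}$-random walk $h^n_\omega$.

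For \eqref{eq:convExpEn2}, the identity $\overline{f}^{n+k}_\omega(x) = \overline{f}^n_\omega(y_k)$ with $y_k := f_{\omega_{n+1}} \circ \cdots \circ f_{\omega_{n+k}}(x)$ writes the gap between consecutive terms as $\overline{f}^n_\omega$ applied to two points, the second of which depends only on $\omega_{n+1},\ldots,\omega_{n+k}$ and is thus independent of $\overline{f}^n_\omega$. The i.i.d.\ reversal $(\omega_0,\ldots,\omega_n) \mapsto (\omega_n,\ldots,\omega_0)$ preserves $\mu^{\otimes(n+1)}$ and sends $\overline{f}^n_\omega$ to $f^n_\omega$, so these random maps agree in law; Theorem \ref{teo:mean} therefore gives $\E[d(\overline{f}^n_\omega(x), \overline{f}^{n+k}_\omega(x))] \leq e^{-\lambda_+ n}$ uniformly in $k$ (and in $x$). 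The sequence $(\overline{f}^n_\omega(x))_n$ is thus $L^1$-Cauchy and, by Borel--Cantelli, almost surely convergent to some $\tilde{T}(\omega)$ which the same bound (with varying $y$) shows to be independent of $x$. Identifying $\tilde{T} = T$ follows from dominated convergence against the $\overline{\mu}$-stationary measure $\overline{\nu}$ combined with the defining property $\overline{f}^n_\omega \overline{\nu} \to \delta_{T(\omega)}$ (Theorem \ref{teo:dkn}\eqref{it:dkn2} applied to $\overline{\mu}$); letting $k \to \infty$ in the inequality above yields \eqref{eq:convExpEn2}.

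For \eqref{eq:convExpEn}, the shift-equivariance $\sigma(\omega) = (f^{n-1}_\omega)^{-1}(\sigma(\tau^n \omega))$ (obtained by iterating $\sigma(\omega) = f_{\omega_0}^{-1}(\sigma(\tau\omega))$, itself a consequence of the defining property of $\sigma$) together with $(f^n_\omega)^{-1}(x) = (f^{n-1}_\omega)^{-1}(f_{\omega_n}^{-1}(x))$ presents both points in $d((f^n_\omega)^{-1}(x), \sigma(\omega))$ as $(f^{n-1}_\omega)^{-1}$ applied to quantities $A, B$ measurable with respect to $\omega_n, \omega_{n+1}, \ldots$, and hence independent of $(f^{n-1}_\omega)^{-1}$. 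Combining the distributional equivalence $(f^{n-1}_\omega)^{-1} \stackrel{d}{=} h^{n-1}_\omega$ with Theorem \ref{teo:mean} applied to $\overline{\mu}$ gives $\E[d(h^{n-1}_\omega(a), h^{n-1}_\omega(b))] \leq e^{-\lambda_-(n-1)}$ uniformly in $a, b$, and Fubini over $(A, B)$ yields \eqref{eq:convExpEn}. For the Hölder regularity, Theorem \ref{teo:MKKO}\eqref{it:MKKO2} applied to $\overline{\mu}$ (in the proximal case, $d=1$) identifies $T_{\ast}\P$ with the $\mu$-stationary measure $\nu$, which is $(C, \alpha)$-Hölder by Theorem \ref{teo:holder}.

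The main obstacle is verifying that Theorem \ref{teo:mean} applies to $\overline{\mu}$ under hypothesis \ref{it:hyp2}. Under \ref{it:hyp1} this is immediate since $\overline{\mu}$ is also finitely supported. Under \ref{it:hyp2}, the max-Lipschitz moment is automatic by symmetry in $g \leftrightarrow g^{-1}$, but transferring the Hölder-derivative moment $\int_G \abs{\log(g^{-1})'}_\tau \dd\mu(g) < \infty$ requires the pointwise bound $\abs{\log(g^{-1})'}_\tau \leq \abs{\log g'}_\tau \cdot \abs{g^{-1}}_{\mathrm{Lip}}^\tau$ together with a Hölder-type estimate that exploits the exponential moment on $\abs{g^{-1}}_{\mathrm{Lip}}$ from \ref{it:hyp2}.
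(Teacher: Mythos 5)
Your proof is correct and reaches the same conclusion, but it reorganizes the argument in a way that differs genuinely from the paper's. For \eqref{eq:convExpEn}, the paper introduces an auxiliary parameter $k > n$, bounds $\E\left[d\left((f^n_\omega)^{-1}(x),(f^k_\omega)^{-1}(y)\right)\right]$ by Fubini and Theorem~\ref{teo:mean} for $\overline{\mu}$, integrates the remaining term $\E\left[d\left((f^k_\omega)^{-1}(y),\sigma(\omega)\right)\right]$ against $\dd\nu(y)$ and sends $k\to\infty$ using $(f^k_\omega)^{-1}\nu\to 1_{\sigma(\omega)}$ and dominated convergence. You instead use the shift-equivariance $\sigma(\omega)=(f^{n-1}_\omega)^{-1}(\sigma(\tau^n\omega))$ to present both points inside the distance as $(f^{n-1}_\omega)^{-1}$ applied to quantities measurable in $\omega_n,\omega_{n+1},\ldots$ and hence independent of $(f^{n-1}_\omega)^{-1}$; a single application of Fubini and Theorem~\ref{teo:mean} (for $\overline{\mu}$, via time-reversal) then gives the bound with no limiting argument. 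This is a cleaner and more self-contained route. For \eqref{eq:convExpEn2}, your $L^1$-Cauchy telescoping argument is structurally the mirror image of the paper's $k\to\infty$ step, while the paper simply reads off \eqref{eq:convExpEn2} as \eqref{eq:convExpEn} applied to $\overline{\mu}$ (which in particular explains why the paper states \eqref{eq:convExpEn2} with the exponent $\lambda_+$ coming from Theorem~\ref{teo:mean} applied directly to $\mu$ — your derivation gets exactly this). Your identification $T_\ast\P=\nu$ via Theorem~\ref{teo:MKKO}\eqref{it:MKKO2} is the correct reading of what the paper leaves implicit when it invokes Theorem~\ref{teo:holder} for the law of $T$.

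One point to be careful about, which you flag but do not close: both your argument and the paper's need Theorem~\ref{teo:mean} (hence Lemma~\ref{lema:lyap}) for $\overline{\mu}$, so under hypothesis~\ref{it:hyp2} one must check $\int_G\abs{\log(g^{-1})'}_\tau\dd\mu(g)<\infty$. Your pointwise bound $\abs{\log(g^{-1})'}_\tau\leq\abs{\log g'}_\tau\cdot\abs{g^{-1}}_{\mathrm{Lip}}^\tau$ is correct, but integrating the product is not immediate from the stated moments: H\"older's inequality would require either $\delta>\tau$ together with a higher moment of $\abs{\log g'}_\tau$, or some further trick (e.g.\ passing to a smaller H\"older exponent $\tau'$), neither of which is supplied. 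The paper asserts without proof that $\overline{\mu}$ satisfies \ref{it:hyp1} or \ref{it:hyp2}, so this is a shared lacuna rather than a defect specific to your proposal; but if you want your argument to be fully self-contained you should spell out this transfer.
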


\begin{proof}
By applying \eqref{eq:convExpEn} to the random walk on $\mathrm{Diff}^1_+(S^1)$ driven by $\overline{\mu}$ where $\overline{\mu}(g) = \mu(g^{-1})$ for all $g \in \mathrm{Diff}^1_+(S^1)$ we conclude that \eqref{eq:convExpEn2} holds. The Hölder continuity of $T$ also follows from Theorem \ref{teo:holder} since $\overline{\mu}$ also satisfies the assumption \eqref{eq:momentCondition}, so it suffices to prove \eqref{eq:convExpEn}.

Let $n,k \in \N$ with $0<n<k$ and let $x,y \in S^1$. We have that
\[
	\E\left[d\left((f^n_\omega)^{-1}(x), \sigma(\omega)\right)\right] \leq \E\left[d\left((f^n_\omega)^{-1}(x), (f^k_\omega)^{-1}(y)\right)\right] + \E\left[d\left((f^k_\omega)^{-1}(y), \sigma(\omega)\right)\right].
\] The random walk driven by $\overline{\mu}$ gives a $\lambda_- > 0$ such that
\[
	\sup_{u,v \in S^1} \E\left[ d\left( f_{\omega_n}^{-1} \circ \cdots \circ f_{\omega_0}^{-1}(u), f_{\omega_n}^{-1} \circ \cdots \circ f_{\omega_0}^{-1}(v)\right) \right] \leq e^{-\lambda_- n}
\] for all sufficiently large $n \in \N$. In particular we deduce that
\begin{align*}
	\E\left[d\left((f^n_\omega)^{-1}(x), (f^k_\omega)^{-1}(y)\right)\right] & = \int \E\left[d\left((f^n_\omega)^{-1}(x), (f^n_\omega)^{-1} \circ \gamma^{-1} (y)\right)\right] \dd \mu^{\ast(k-n)}(\gamma) \\
	& \leq \sup_{u,v \in S^1}\E\left[d\left((f^n_\omega)^{-1}(u), (f^n_\omega)^{-1}(v)\right)\right] \\
	& = \sup_{u,v \in S^1} \E\left[ d\left( f_{\omega_n}^{-1} \circ \cdots \circ f_{\omega_0}^{-1}(u), f_{\omega_n}^{-1} \circ \cdots \circ f_{\omega_0}^{-1}(v)\right) \right] \leq e^{-\lambda_- n}
\end{align*} where we have used that the $f_{\omega_j}$ are independent and identically distributed in the last equality.  Hence the inequality
\begin{equation} \label{eq:convExp}
	\sup_{x \in S^1} \E\left[d\left((f^n_\omega)^{-1}(x), \sigma(\omega)\right)\right] \leq e^{-\lambda_- n} + \E\left[d\left((f^k_\omega)^{-1}(y), \sigma(\omega)\right)\right]
\end{equation} holds, and by integrating \eqref{eq:convExp} in $\dd \nu(y)$ we conclude that
\begin{align*}
	\sup_{x \in S^1} \E\left[d\left((f^n_\omega)^{-1}(x), \sigma(\omega)\right)\right] &\leq e^{-\lambda_- n} + \E\left[\int_{S^1}d\left((f^k_\omega)^{-1}(y), \sigma(\omega)\right) \dd \nu(y)\right]\\
	& = e^{-\lambda_- n} + \E\left[\int_{S^1}d(y, \sigma(\omega)) \dd (f^k_\omega)^{-1} \nu(y)\right].
\end{align*} The dominated convergence theorem and Theorem \ref{teo:dkn}, \eqref{it:dkn2} imply that
\[
	\E\left[\int_{S^1}d(y, \sigma(\omega)) \dd (f^k_\omega)^{-1} \nu(y)\right] \xrightarrow[k \to \infty]{} \E\left[ \int_{S^1}d(y,\sigma(\omega)) \dd 1_{\sigma(\omega)}(y)\right] = 0,
\] so \eqref{eq:convExpEn} holds.
\end{proof}

Recall that the proof of Theorem \ref{teo:b} (when the subgroups of $\mathrm{Homeo}_+(S^1)$ act proximally) involves trying to find for a given $n \in \N$ small disjoint open intervals $U, V \subset S^1$ containing $\sigma(\omega)$ and $f^n_\omega(0)$ respectively such that $f^n_\omega(S^1 \setminus U) \subseteq V$. Here, the diameters of $U,V$ depend on $\omega$ but not on $n$. In this sense, $\sigma(\omega)$ and $f^n_\omega(0)$ are a repulsor-attractor pair for $f^n_\omega$ in a weak sense that is sufficient for the proof of the qualitative statement in Theorem \ref{teo:b}. On the other hand, to show Theorem \ref{teo:a} we need to show that $\P\left[f^n_\omega(S^1 \setminus U_n) \subset V_n\right]$ is exponentially close to 1 as $n$ increases, where $U_n$ and $V_n$ are disjoint intervals centered around $\sigma(\omega)$ and $f^n_\omega(0)$ respectively such that $\mathrm{diam}(U_n),\, \mathrm{diam}(V_n)$ are exponentially small in $n$. The fact that this contraction takes place does not follow from the definition of $\sigma(\omega)$, since $\sigma(\omega)$ is defined by an asymptotic condition saying that $\P$-almost surely any fixed closed interval inside $S^1 \setminus \{\sigma(\omega)\}$ is eventually contracted by $f^n_\omega$. Nevertheless, the following proposition shows that $\sigma(\omega)$ and $f^n_\omega(0)$ are a repulsor-attractor pair for $f^n_\omega$ in a strong sense suitable for our purposes. This is one of the main points where the strategy deviates from the linear case.

\begin{prop} \label{prop:contraccionExp}
There exists an $\epsilon \in (0,1)$ such that
\[
	\limsup_{n \to \infty} \frac{1}{n}\log \P\left[ f^n_\omega\left(S^1 - \sigma(\omega)^{\epsilon^n}\right) \text{is not included in }f^n_\omega(0)^{\epsilon^n}\right] < 0.
\]
\end{prop}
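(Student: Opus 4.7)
The plan is to pass to the inverse: the desired inclusion $f^n_\omega(S^1 - \sigma(\omega)^{\epsilon^n}) \subseteq f^n_\omega(0)^{\epsilon^n}$ is equivalent to $(f^n_\omega)^{-1}(S^1 - f^n_\omega(0)^{\epsilon^n}) \subseteq \sigma(\omega)^{\epsilon^n}$. I would combine Proposition \ref{prop:convExp} (exponential contraction in mean of $(f^n_\omega)^{-1}$ towards $\sigma(\omega)$), the Hölder regularity of the law of $\sigma(\omega)$ --- which equals $\overline{\nu}$ by Theorem \ref{teo:MKKO}, \eqref{it:MKKO2} and is Hölder by Theorem \ref{teo:holder} applied to $\overline{\mu}$ --- a net argument, and the monotonicity of $(f^n_\omega)^{-1}$ as a degree-one orientation-preserving homeomorphism of $S^1$.

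First I would fix $\epsilon \in (e^{-\lambda_-/2}, 1)$ and place an equispaced net $\{x_i\}_{i=0}^{N-1} \subset S^1$ of spacing at most $\epsilon^n$, so that $N = O(\epsilon^{-n})$. Markov's inequality applied to Proposition \ref{prop:convExp} together with a union bound yield that the event
\[
A_n = \left\{\omega \mid \exists\, i :\; d((f^n_\omega)^{-1}(x_i), \sigma(\omega)) > \epsilon^n/4\right\}
\]
has probability at most a constant times $(e^{-\lambda_-}/\epsilon^2)^n$, which decays exponentially by the choice of $\epsilon$. Simultaneously, the Hölder regularity of $\overline{\nu}$ gives $\P[d(0, \sigma(\omega)) \leq \epsilon^n/4] \leq C \epsilon^{\alpha n}$, also exponentially small.

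On the complement of these two bad events all the preimages $(f^n_\omega)^{-1}(x_i)$ lie in $\sigma(\omega)^{\epsilon^n/4}$, an arc of length $\epsilon^n/2$. Since $(f^n_\omega)^{-1}$ has degree one and is orientation-preserving, the preimage arcs $(f^n_\omega)^{-1}((x_i, x_{i+1}))$ tile $S^1$ exactly once while sharing their endpoints inside $\sigma(\omega)^{\epsilon^n/4}$; hence exactly one of them --- call it the \emph{long arc}, corresponding to a net interval $(x_{i_0}, x_{i_0+1})$ --- wraps around $S^1$, while all others are short arcs contained in $\sigma(\omega)^{\epsilon^n/4}$. The long preimage arc must contain $0$, because $0 = (f^n_\omega)^{-1}(f^n_\omega(0))$ is not in $\sigma(\omega)^{\epsilon^n/4}$ on the second good event; therefore $f^n_\omega(0) \in (x_{i_0}, x_{i_0+1})$, and since this net interval has length at most $\epsilon^n$ we obtain $(x_{i_0}, x_{i_0+1}) \subseteq f^n_\omega(0)^{\epsilon^n}$. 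Any $x \notin f^n_\omega(0)^{\epsilon^n}$ thus lies in a short net interval, whose preimage sits in $\sigma(\omega)^{\epsilon^n/4} \subseteq \sigma(\omega)^{\epsilon^n}$, giving the inclusion.

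The main obstacle is that a direct Markov bound controls $(f^n_\omega)^{-1}(x)$ for a single $x$ only, while the proposition requires uniformity over all $x \in S^1 - f^n_\omega(0)^{\epsilon^n}$. Balancing the size $N \approx \epsilon^{-n}$ of the net against the decay $e^{-\lambda_- n}/\epsilon^n$ is what forces the choice $\epsilon > e^{-\lambda_-/2}$; the degree-one monotonicity of $(f^n_\omega)^{-1}$ is then the key tool that upgrades the finitely many pointwise estimates to the required uniform statement, by pinning down the unique long arc of $(f^n_\omega)^{-1}$ inside a neighborhood of $f^n_\omega(0)$.
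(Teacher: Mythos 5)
Your proof is correct, and it takes a cleaner and mildly different route from the paper's. The paper places its grid $x_k^n = k/K_n$ (with $K_n = \lfloor e^{\lambda n/3}\rfloor$) in the \emph{source} circle and uses Theorem~\ref{teo:mean} to define an event $V_n$ on which the images $f^n_\omega(x_k^n)$ of consecutive grid points are within $e^{-\lambda n/2}$; this isolates a unique grid interval $J_{n,\omega}$ that gets expanded, and one then has to show $\E[d(J_{n,\omega},\sigma(\omega)) \mid V_n]$ is exponentially small via Proposition~\ref{prop:convExp} and a trick that compares $J_{n,\omega}$ to $(f^n_\omega)^{-1}(0)$ or $(f^n_\omega)^{-1}(1/2)$. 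You instead pass to the inverse immediately, place the net in the \emph{target} circle, and apply Markov plus a union bound directly to the backward estimate $\E[d((f^n_\omega)^{-1}(x),\sigma(\omega))] \leq e^{-\lambda_- n}$ of Proposition~\ref{prop:convExp}; the topological role of $V_n$ and $J_{n,\omega}$ is taken over by the observation that an orientation-preserving degree-one circle homeomorphism whose values on a net land in a short arc has a unique ``long'' preimage interval. This buys you a slightly shorter argument that does not condition on $V_n$, uses only Proposition~\ref{prop:convExp} and Theorem~\ref{teo:holder} directly (Theorem~\ref{teo:mean} is still needed, but only through Proposition~\ref{prop:convExp}), and makes the constraint $\epsilon > e^{-\lambda_-/2}$ appear transparently from balancing the net cardinality $\epsilon^{-n}$ against the Markov decay $e^{-\lambda_- n}/\epsilon^n$; the paper's version is closer to the template of \cite[Proposition~4.18]{KG}. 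One small point worth spelling out if you write this up: the ``long'' preimage arc is characterized as the unique one among the disjoint preimage arcs that contains $S^1 \setminus \sigma(\omega)^{\epsilon^n/4}$, which exists and is unique because all arcs have both endpoints in $\sigma(\omega)^{\epsilon^n/4}$ and they tile $S^1$.
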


\begin{proof}
Let $\lambda >0,\, N \in \N$ be the constants given by Theorem \ref{teo:mean}, so
\begin{equation} \label{eq:mean}
	\sup_{x,y \in S^1} \E\left[d\left(f^n_\omega(x), f^n_\omega(y)\right)\right] \leq e^{-\lambda n}
\end{equation} for all $n \geq N$.

For $n \geq N$ define $K_n = \floor{e^{\lambda n /3}}$,  the grid $x^n_k = k/K_n \in S^1,\, 0\leq k \leq K_n-1$ and the event 
\[
	V_n = \{\omega \in \Omega \mid d(f^n_\omega(x^n_k), f^n_\omega\left(x^n_k\right)) \leq e^{-\lambda n/2} \text{ for all } 0 \leq k \leq K_n-1\},
\] so we have
\begin{equation*}
	\P\left[V_n^c\right] \leq \sum_{k = 1}^{K_n} \P\left[ d\left( f^n_\omega(x_k), f^n_\omega(x_{k+1})\right) \geq e^{-\lambda n/2} \right] \leq e^{-\lambda n/2} K_n \leq e^{-\lambda n/6},
\end{equation*} where we have used the Markov inequality and \eqref{eq:mean}.

Notice that if $\omega \in V_n$, then there exists a unique interval $J_{n,\omega} \subset S^1$ of the form $[x^n_{j},x^n_{j + 1})$ such that $\mathrm{diam}\left(f^n_\omega (J_{n, \omega})\right) \geq 1 - e^{-\lambda n/2}$.

\begin{claim} There exists $C > 0$ such that $\E\left[d(J_{n,\omega}, \sigma(\omega)) \mid V_n\right] \leq C e^{-\lambda_1 n}$ for all large enough $n \in \N$, where $\lambda_1 = \max\{ \lambda/6, \lambda_-\}$ and $\lambda_-$ is given by Proposition \ref{prop:convExp}.
\end{claim}

\begin{proof}[Proof of the claim]
Given $\omega \in V_n$, define $j_{n,\omega} \in S^1$ by
\[
	j_{n,\omega} = \begin{cases}(f^n_\omega)^{-1}(0) & \text{ if }0 \in f^n_\omega(J_{n,\omega})\\
	(f^n_\omega)^{-1}(1/2) & \text{ otherwise.}
	\end{cases}
\] Since $\mathrm{diam}\left( f^n_\omega(J_{n,\omega})\right) \geq 1 - e^{-\lambda n/2}$, for all $n > 2\log 2/\lambda$ we have that $f^n_\omega(J_{n,\omega})$ contains 0 or $1/2$, so $j_{n, \omega} \in J_{n,\omega}$. Thus
\begin{align*}
	\E\left[d(J_{n,\omega}, \sigma(\omega)) \mid V_n\right] & \leq \E\left[\mathrm{diam}(J_{n, \omega}) + d(j_{n,\omega}, \sigma(\omega)) \mid V_n\right]\\ &\leq \frac{1}{K_n} + \E\left[d\left((f^n_\omega)^{-1}(0), \sigma(\omega)\right) \mid V_n\right] + \E\left[d\left((f^n_\omega)^{-1}(1/2), \sigma(\omega)\right) \mid V_n\right] \\
	& \leq \frac{1}{K_n} + \P\left[ V_n \right]^{-1}\left( \E\left[d\left((f^n_\omega)^{-1}(0), \sigma(\omega)\right)\right] + \E\left[d\left((f^n_\omega)^{-1}(1/2), \sigma(\omega)\right)\right]\right)
\end{align*} for $n > \max\{ 2 \log 2/\lambda, N\}$. From the bound \eqref{eq:convExpEn} and the fact that $\P[V_n]$ is bounded away from 0 we obtain the conclusion.
\end{proof}
    
Let $C, \lambda_1 >0 $ be the constants given by the previous claim and take $\epsilon \in (e^{-\lambda_1},1)$, so
\begin{align} \nonumber
	\P\left[ f^n_\omega\left(S^1 \setminus \sigma(\omega)^{\epsilon^n}\right) \not \subseteq f^n_\omega(0)^{\epsilon^n} \right] & \leq \P\left[ f^n_\omega\left(S^1 \setminus \sigma(\omega)^{\epsilon^n}\right) \not \subseteq f^n_\omega(0)^{\epsilon^n} \,\middle|\, V_n \right]  + \P\left[ V_n^c \right]\\
	& \leq  \P\left[J_{n,\omega} \not \subseteq \sigma(\omega)^{\epsilon^n} \text{ or }f^n_\omega\left(S^1\setminus J_{n,\omega}\right) \not \subseteq f^n_\omega(0)^{\epsilon^n}\,\middle|\, V_n \right] + \P\left[ V_n^c \right]. \label{eq:epsilon1}
\end{align} for large enough $n \in \N$. Since $\epsilon > e^{-\lambda/3}$ and $K_n = \floor{e^{\lambda n/3}}$, there exists a constant $C'> 0$ such that the inequalities
\begin{align} \nonumber
	\P\left[ J_{n,\omega} \not \subseteq \sigma(\omega)^{\epsilon^n} \, \middle| \, V_n \right] & \leq \P\left[ d(J_{n,\omega} , \sigma(\omega)) \geq \epsilon^n - \mathrm{diam}(J_{n,\omega}) \, \middle| \, V_n \right] \\ 
	& \leq \frac{\E\left[ d(J_{n,\omega}, \sigma(\omega)\, \middle| \, V_n \right]}{\epsilon^n - 1/K_n} \leq C \left( \frac{e^{-\lambda_1}}{\epsilon} \right)^n\left(\frac{1}{1 - 1/(\epsilon^nK_n)}\right) \leq C' \left( \frac{e^{-\lambda_1}}{\epsilon} \right)^n  \label{eq:epsilon2}
\end{align} hold, so the right-hand side of \eqref{eq:epsilon2} decreases with exponential speed towards 0 by the choice of $\epsilon$.

Similarly, since $\epsilon^n \geq e^{-\lambda n/6}  \geq \mathrm{diam}\left( f^n_\omega\left( S^1 \setminus J_{n,\omega} \right) \right)$ for all large $n \in \N$, we see that
\begin{align}
	\nonumber \P\left[ f^n_\omega\left( S^1 \setminus J_{n,\omega} \right) \not \subseteq f^n_\omega(0)^{\epsilon^n} \, \middle| \, V_n \right] & \leq \P \left[ 0 \in J_{n,\omega} \, \middle| \, V_n \right] \\
	\nonumber & \leq \P \left[ 0 \in J_{n,\omega} \text{ and } J_{n,\omega} \subseteq \sigma(\omega)^{\epsilon^n} \, \middle| \, V_n \right] + C' \left( \frac{e^{-\lambda_1}}{\epsilon} \right)^n \\
	\nonumber & \leq \P \left[ d(0, \sigma(\omega) ) \leq \epsilon^n \, \middle| \, V_n \right]  + C' \left( \frac{e^{-\lambda_1}}{\epsilon} \right)^n, \label{eq:epsilon3}
\end{align} where we have used \eqref{eq:epsilon2} in the second inequality. Moreover, Theorem \ref{teo:holder} provides $C'',\alpha > 0$ such that
\[
	\P\left[ d(0, \sigma(\omega)) \leq \epsilon^n\right] \leq C'' \epsilon^{\alpha n}, 
\] and from \eqref{eq:epsilon1} we conclude that
\begin{equation} \label{eq:epsilon4}
	\P\left[ f^n_\omega\left( S^1 \setminus J_{n,\omega} \right) \not \subseteq f^n_\omega(0)^{\epsilon^n} \, \middle| \, V_n \right] \leq \P\left[ V_n \right]^{-1}C'' \epsilon^{\alpha n} + C' \left( \frac{e^{-\lambda_1}}{\epsilon} \right)^n.
\end{equation}
The bounds \eqref{eq:epsilon4} and \eqref{eq:epsilon2} show that the right-hand side of \eqref{eq:epsilon1} is exponentially small in $n$. This finishes the proof of the proposition.
\end{proof}

From now on, the rest of the proof of Theorem \ref{teo:a} follows the strategy of \cite{aoun}.

\begin{prop} \label{prop:auto}
For every $t \in (0,1)$ we have
\[
	\limsup_{n\to \infty} \frac{1}{n}\log \P\left[ d(f^n_\omega(0),\sigma(\omega)) \leq t^n \right] < 0.
\]
\end{prop}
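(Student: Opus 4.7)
Our plan follows Aoun's strategy \cite[Theorem 4.17]{aoun}, adapted via the tools set up above. The starting point is the cocycle identity
\[\sigma(\omega) = (f^n_\omega)^{-1}\bigl(\sigma(T^{n+1}\omega)\bigr),\]
obtained by applying $(f^{n+k}_\omega)^{-1} = (f^n_\omega)^{-1} \circ (f^{k-1}_{T^{n+1}\omega})^{-1}$ and letting $k \to \infty$ via Theorem \ref{teo:dkn}\eqref{it:dkn2}. Setting $\tau := \sigma(T^{n+1}\omega)$, the variable $\tau$ is independent of $(\omega_0, \ldots, \omega_n)$ and has law $\overline{\nu}$, Hölder regular by Theorem \ref{teo:holder} applied to $\overline{\mu}$. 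I also introduce an independent auxiliary random variable $\xi \sim \nu$, similarly Hölder regular.

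The strategy is to compare $d(f^n_\omega(0), \sigma(\omega))$ with the decoupled distance $d(f^n_\omega(\xi), (f^n_\omega)^{-1}(\tau))$. Using Proposition \ref{prop:contraccionExp} applied both to $(f^n_\omega)$ and to its inverse walk (driven by $\overline{\mu}$, with repulsor $T(\omega)$), together with Proposition \ref{prop:convExp} and the Hölder estimates $\P[\xi \in \sigma(\omega)^{\epsilon^n}], \P[\tau \in T(\omega)^{\epsilon^n}] \leq C \epsilon^{\alpha n}$, the two distances differ by at most $O(\epsilon^n + e^{-\lambda_- n/2})$ on a high-probability event. Conditioning on $f^n_\omega$, $\xi$ and $\tau$ remain independent with conditional laws $\nu$ and $\overline{\nu}$, so the decoupled probability equals $\int \overline{\nu}(f^n_\omega(B(f^n_\omega(\xi), r))) \, d\nu(\xi)$ by Fubini. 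Splitting this integrand according to whether $B(f^n_\omega(\xi), r)$ meets $\sigma(\omega)^{\epsilon^n}$ bounds the non-intersecting part by $C \epsilon^{\alpha n}$ (Proposition \ref{prop:contraccionExp} yields image diameter $\leq 2\epsilon^n$, then Hölder of $\overline{\nu}$), and reduces the intersecting part to estimating $\nu(f^n_\omega^{-1}(B(\sigma(\omega), r + \epsilon^n)))$, controlled by Proposition \ref{prop:contraccionExp} for the inverse walk and Hölder of $\nu$, up to the residual event $\{d(\sigma(\omega), T(\omega)) \leq r + 2\epsilon^n\}$.

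The main obstacle is this last event, which requires a joint regularity statement for the pair $(\sigma, T)$. The plan is to show that $(\sigma, T) \sim \nu \otimes \overline{\nu}$, so that $\sigma \perp T$ and $\P[d(\sigma, T) \leq \delta] \leq C \delta^\alpha$ by Hölder of $\overline{\nu}$. This would follow by observing that $(\sigma, T)_\ast \P$ is a stationary measure for the random dynamical system $(G, \mu) \curvearrowright S^1 \times S^1$ via $g \cdot (x, y) = (g(x), g^{-1}(y))$, locally contracting by Lemma \ref{lema:lyap} applied to both $\mu$ and $\overline{\mu}$, and that $\nu \otimes \overline{\nu}$ is stationary for this action. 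Uniqueness of the stationary measure (which requires verifying minimality of the skew action on $S^1 \times S^1$, a consequence of proximality of $G$) then forces $(\sigma, T) \sim \nu \otimes \overline{\nu}$ and closes the argument for every $t \in (0, 1)$.
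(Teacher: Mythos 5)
Your plan hinges --- as you yourself note --- on controlling the residual event $\{d(\sigma(\omega), T(\omega))\leq\delta\}$, and your proposed route is to show that $(\sigma, T)$ has product law, i.e.\ that the repulsor $\sigma(\omega)$ of $(f^n_\omega)_n$ and the attractor $T(\omega)$ of the reversed walk are \emph{independent}. This is false. Both boundary points are driven by the same increment sequence: under one shift of $\omega$ they transform as $\sigma \mapsto f_{\omega_0}(\sigma)$ and $T \mapsto f_{\omega_0}^{-1}(T)$, so the same $f_{\omega_0}$ appears in both coordinates. In a ping-pong configuration, conditioning on $\omega_0 = g$ pins $\sigma(\omega)$ near the repelling fixed point of $g$ and $T(\omega)$ near the attracting one, and these two events are strongly correlated, not independent. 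Correspondingly, the product of the marginals is not a stationary measure for the coupled skew action $g\cdot(x,y)=(g(x),g^{-1}(y))$: evaluating one step of the transfer operator on a test function $\phi\otimes\psi$ gives $\int_G \big(\int\phi\circ g\,d\nu\big)\big(\int\psi\circ g^{-1}\,d\overline\nu\big)\,d\mu(g)$, and asking this to factor as $(\int\phi\,d\nu)(\int\psi\,d\overline\nu)$ is a $\mu$-decorrelation condition, not a consequence of the two marginals being separately stationary. So the uniqueness-of-stationary-measure argument you sketch cannot output a product law, because the product law is not even a candidate.

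The paper avoids this issue entirely. It never looks at the pair of boundary points attached to a single trajectory $\omega$. Instead it brings in an \emph{independent copy} $\omega'$ of the noise, sets $S(\omega')$ to be the attractor of the reversed walk of $\omega'$ (so $\sigma(\omega) \perp S(\omega')$ by fiat), and proves a telescoping claim showing that $(f^n_\omega(0), \sigma(\omega))$ is exponentially close in the Lipschitz dual distance to $(\sigma(\omega), S(\omega'))$: the word $\omega_0,\ldots,\omega_n$ is split at $\lceil n/2\rceil$, and Proposition~\ref{prop:convExp} is used four times to replace the early half by $\sigma(\omega)$ and the late half by $S(\omega')$. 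Once that is in place, the H\"older regularity furnished by Theorem~\ref{teo:holder}, applied to the law of $\sigma$ alone with $S$ frozen by independence, finishes the argument. Your opening ingredients --- the cocycle identity for $\sigma$, localizing the image of $f^n_\omega$ via Proposition~\ref{prop:contraccionExp}, the Fubini reduction --- are in the right spirit and would slot into such a scheme, but as written they funnel into the false independence of $(\sigma(\omega), T(\omega))$, and the proposal does not close.
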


\begin{proof}
We start with a version of \cite[Theorem 4.35]{aoun} (which in turn is inspired by \cite[Lemme 8]{guivarch1990}), which states that the variables $f^n_\omega(0)$ and $\sigma(\omega)$ become asymptotically independent with exponential speed as $n \to \infty$.
\begin{claim}
There exists a random variable $S(\omega) \in S^1$ independent of $\sigma$ and constants $C, \lambda > 0$ such that for any Lipschitz function $\psi \colon S^1 \times S^1 \to \R$ we have
\[
	\abs{\E\left[\psi\left(f^n_\omega(0),\sigma(\omega)\right)\right] -  \E\left[\psi\left(\sigma(\omega), S(\omega)\right)\right]} \leq C e^{-\lambda n} \abs{\psi}_{\mathrm{Lip}}
\] for sufficiently large $n \in \N$, where 
\[
	\abs{\psi}_{\mathrm{Lip}} = \sup_{\substack{x,y,u,v \in S^1\\ x \neq y \text{ \emph{or} } u \neq v}}\frac{\abs{\psi(x,u) - \psi(y,v)}}{d(x,y) + d(u,v)}.
\]
\end{claim}

\begin{proof}[Proof of the claim]
Let $\lambda_-, \lambda_+ > 0$ be the constants given by Proposition \ref{prop:convExp}. Consider an independent copy $\omega' = (f_{\omega_n'})_{n \in \N}$ of the process $\omega$ (that is, a coupling of $\P$ with itself). Define $S(\omega')$ as the repulsor of the random walk $\left(f_{\omega_n'}^{-1} \circ f_{\omega_{n-1}'}^{-1} \circ \cdots \circ f_{\omega_0'}^{-1}\right)_{n \in \N}$, so
\[
	\sup_{x \in S^1} \E\left[ d\left( \overline{f}^n_{\omega'}(x), S(\omega')\right)\right] \leq e^{-\lambda_+ n}
\] holds for all large $n \in \N$.

Decompose
\[
	\abs{\E\left[\psi\left(f^n_\omega(0),\sigma(\omega)\right)\right] -  \E\left[\psi\left(\sigma(\omega), S(\omega')\right)\right]} \leq \triangle_1 + \triangle_2 + \triangle_3 + \triangle_4
\] where
\begin{itemize}
	\item $\triangle_1 = \abs{\E\left[\psi\left(\sigma(\omega), f^n_\omega(0)\right)\right] - \E\left[\psi\left((f^n_\omega)^{-1}(0), f^n_\omega(0)\right)\right]}$,
	\item $\triangle_2 = \abs{\E\left[\psi\left((f^n_\omega)^{-1}(0), f^n_\omega(0)\right)\right] - \E\left[\psi\left(\left(f^{\ceil{n/2}}_\omega\right)^{-1}(0), f_{\omega_n} \circ \cdots \circ f_{\omega_{\ceil{n/2} + 1}}(0)\right)\right]}$,
	\item $\triangle_3 = \abs{\E\left[\psi\left(\left(f^{\ceil{n/2}}_\omega\right)^{-1}(0), f_{\omega_n} \circ \cdots \circ f_{\omega_{\ceil{n/2} + 1}}(0)\right)\right] - \E\left[\psi\left(\sigma(\omega),\overline{f}^{\floor{n/2}}_{\omega'}(0)\right)\right] }$
    
\noindent
\quad \;  $= \abs{\E\left[\psi\left(\left(f^{\ceil{n/2}}_\omega\right)^{-1}(0), \overline{f}^{\floor{n/2}}_{\omega'}(0)\right)\right]- \E\left[\psi\left(\sigma(\omega),\overline{f}^{\floor{n/2}}_{\omega'}(0)\right)\right] }$, and
	\item $\triangle_4 = \abs{\E\left[\psi\left(\sigma(\omega),\overline{f}^{\floor{n/2}}_{\omega'}(0)\right)\right] - \E\left[\psi\left(\sigma(\omega), S(\omega')\right)\right]}$.
\end{itemize}

Proposition \ref{prop:convExp} shows that $\triangle_1 \leq \abs{\psi}_\mathrm{Lip}e^{-\lambda_- n}, \, \triangle_3 \leq \abs{\psi}_\mathrm{Lip} e^{-\lambda_- n/2}$ and $\triangle_4 \leq \abs{\psi}_\mathrm{Lip} e^{-\lambda_+n/2}$ for all large $n \in \N$. Moreover
\begin{align*}
	\triangle_2 &\leq \abs{\psi}_\mathrm{Lip}\left(\E\left[d\left((f^n_\omega)^{-1}(0), \left(f^{\ceil{n/2}}_\omega\right)^{-1}(0)\right)\right] + \E\left[ d\left(f^n_\omega(0), f_{\omega_n} \circ \cdots \circ f_{\omega_{\ceil{n/2} + 1}}(0)\right)\right]\right)\\
	&= \abs{\psi}_\mathrm{Lip}\left(\E\left[d\left((f^n_\omega)^{-1}(0), \left(f^{\ceil{n/2}}_\omega\right)^{-1}(0)\right)\right] + \E\left[ d\left(\overline{f}^n_\omega(0), \overline{f}^{\floor{n/2}}_\omega(0)\right)\right]\right)\\
	& \leq  \abs{\psi}_\mathrm{Lip}\left(\E\left[d\left((f^n_\omega)^{-1}(0), \sigma(\omega) \right) \right] + \E \left[ d\left(\left(f^{\ceil{n/2}}_\omega\right)^{-1}(0), \sigma(\omega) \right)\right] \right. \\
	&  \hspace{4cm} + \left. \E\left[ d\left(\overline{f}^n_\omega(0), T(\omega)\right)\right] + \E\left[ d\left( \overline{f}^{\floor{n/2}}_\omega(0), T(\omega)\right) \right]\right)
\end{align*}
which is at most 
\[
	\abs{\psi}_\mathrm{Lip} \left( e^{-\lambda_- n} + e^{-\lambda_- n/2} + e^{-\lambda_+ n} + e^{-\lambda_+ n/2}\right)
\] by Proposition \ref{prop:convExp} again. The claim follows.
\end{proof}

For any $\epsilon \in (0,1/2)$, take $\phi_\epsilon \colon [0,1] \to [0,1]$ a $1/\epsilon$-Lipschitz function such that $\restr{\phi}{[0,\epsilon]} = 1$ and $\restr{\phi}{[2\epsilon,1]} = 0$, so
\[
	1_{[0,\epsilon]} \leq \phi_\epsilon \leq 1_{[0,2\epsilon]}
\] holds and $\psi_\epsilon \doteq \phi_\epsilon \circ d \colon S^1 \times S^1 \to [0,1]$ is also $1/\epsilon$-Lipschitz. Let $C,\lambda > 0$ be the constants given by the previous claim.

Now for all large $n \in \N$ we have
\begin{align*}
	\P\left[ d\left(f^n_\omega(0), \sigma(\omega)\right) \leq t^n \right] \leq \E\left[\psi_{t^n}\left(f^n_\omega(0), \sigma(\omega)\right)\right] & \leq \E\left[\psi_{t^n}\left(\sigma(\omega), S(\omega)\right)\right] + Ce^{-\lambda n} \abs{\psi_{t^n}}_\mathrm{Lip} \\ & \leq  \P\left[ d\left(\sigma(\omega), S(\omega)\right) \leq 2t^n\right] + Ce^{-\lambda n} \abs{\psi_{t^n}}_\mathrm{Lip} \\
	& \leq \sup_{x  \in S^1}\P\left[d(\sigma(\omega),x) \leq 2 t^n\right] + Ce^{-\lambda n} \abs{\psi_{t^n}}_\mathrm{Lip},
\end{align*} where we have used the independence of $\sigma$ and $S$ in the last inequality. The first term
\[
	\sup_{x  \in S^1}\P\left[ d(\sigma(\omega),x) \leq 2 t^n\right]
\] is exponentially small in $n$ by Theorem \ref{teo:holder}, and the second term
\[
	Ce^{-\lambda n/2} \abs{\psi_{t^n}}_\mathrm{Lip} = C\left( \frac{e^{-\lambda/2}}{t}\right)^n
\] is also exponentially small in $n$ whenever $t  > e^{-\lambda/2}$. The proposition is thus proven in this case, and is also true for $t \leq  e^{-\lambda /2}$ as a consequence.
\end{proof}

\begin{rmk}
In the proof of the previous theorem we have abused notation by writing $S(\omega)$: the random variable $S$ is not a function of $\omega$, and is defined on a larger probability space. We have done so as to not to weigh down the notation with distinctions between this larger probability space and its quotient $(\Omega, \P)$, since all relevant means and measures of sets coincide with those of $\P$.
\end{rmk}

\begin{proof}[\textbf{Proof of Theorem \ref{teo:a}}] Fix $\mu_1, \, \mu_2$ two nondegenerate probability measures on countable subgroups $G_1,\, G_2$ of $\mathrm{Diff}^1_+(S^1)$ acting proximally on $S^1$ such that $\mu_1,\, \mu_2$ satisfy the moment condition \eqref{eq:momentCondition}.

\begin{claim} \label{prop:inter}
For every $n \in \N$ let $\omega' \in \Omega_2 \mapsto l_n(\omega') \in S^1$ be a measurable map. For every $t \in (0,1)$ we have
\begin{equation} \label{eq:claim1}
	\limsup_{n\to \infty} \frac{1}{n}\log \P_1 \otimes \P_2\left[ d(f^n_\omega(0), l_n(\omega')) \leq t^n \right] < 0
\end{equation} and 
\begin{equation} \label{eq:claim2}
	\limsup_{n\to \infty} \frac{1}{n}\log \P_1 \otimes \P_2\left[ d(\sigma(\omega), l_n(\omega')) \leq t^n \right] < 0.
\end{equation}
\end{claim}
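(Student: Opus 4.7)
The plan is to reduce both parts to uniform-in-$x$ probability bounds via independence of $\omega$ and $\omega'$, and then appeal to Hölder continuity of the relevant stationary measures. Since $l_n$ depends only on $\omega'$, Fubini gives
\[
\P_1 \otimes \P_2\left[d(Y_n(\omega), l_n(\omega')) \leq t^n\right] \leq \sup_{x \in S^1} \P_1\left[d(Y_n(\omega), x) \leq t^n\right]
\]
for $Y_n \in \{\sigma, f^n_\omega(0)\}$, so it suffices to show that each supremum decays exponentially in $n$.

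For \eqref{eq:claim2} this is immediate: proximality forces $d = 1$ in Theorem \ref{teo:MKKO}, so $\sigma^\ast \P_1 = \overline{\nu}_1$ by \eqref{it:MKKO2}. Since $\overline{\mu}_1$ satisfies the same Lipschitz moment hypothesis as $\mu_1$, Theorem \ref{teo:holder} gives $C, \alpha > 0$ with $\overline{\nu}_1(B(x, t^n)) \leq C t^{\alpha n}$ uniformly in $x$.

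For \eqref{eq:claim1} the plan is to promote Theorem \ref{teo:mean} into a quantitative convergence of the law of $f^n_\omega(0)$ to $\nu_1$. By stationarity of $\nu_1$, for any $K$-Lipschitz $\phi \colon S^1 \to \R$,
\[
\abs{\E\left[\phi(f^n_\omega(0))\right] - \int \phi \dd \nu_1} \leq K \int \E\left[d(f^n_\omega(0), f^n_\omega(y))\right] \dd \nu_1(y) \leq K e^{-\lambda_+ n}
\]
by Theorem \ref{teo:mean} and Fubini. Applying this to $\phi = \phi_{t^n}(d(\cdot, x))$, where $\phi_{t^n}$ is the cutoff from the proof of Proposition \ref{prop:auto} (so that $\phi$ is $1/t^n$-Lipschitz and dominates $1_{B(x, t^n)}$), yields
\[
\P_1[d(f^n_\omega(0), x) \leq t^n] \leq \nu_1(B(x, 2t^n)) + (e^{-\lambda_+}/t)^n.
\]
For $t \in (e^{-\lambda_+}, 1)$ both terms are exponentially small in $n$ uniformly in $x$, the first by Theorem \ref{teo:holder} applied to $\mu_1$. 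The range $t \in (0, e^{-\lambda_+}]$ is absorbed by monotonicity of the probability in $t$.

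The main (mild) obstacle is noticing that the averaged two-point contraction in Theorem \ref{teo:mean} converts, via stationarity of $\nu_1$, into a quantitative convergence of $\mu_1^{\ast n} \ast \delta_0$ to $\nu_1$ against Lipschitz test functions. Once this is observed, both parts of the claim reduce to the Hölder continuity of stationary measures guaranteed by Theorem \ref{teo:holder}, and no asymptotic independence argument of the type used in Proposition \ref{prop:auto} is needed here (since $l_n(\omega')$ is already independent of $f^n_\omega(0)$).
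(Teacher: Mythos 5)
Your argument is correct and, for \eqref{eq:claim1}, takes a genuinely different route from the paper. The paper reduces $\sup_x \P_1[d(f^n_\omega(0),x) \le t^n]$ to $\sup_x \P_1[d(\overline{f}^n_\omega(0),x) \le t^n]$ using that $f^n_\omega(0)$ and $\overline{f}^n_\omega(0)$ are equidistributed, then applies the Markov inequality to the convergence $\E[d(\overline{f}^n_\omega(0), T(\omega))] \le e^{-\lambda_+ n}$ from Proposition \ref{prop:convExp} and concludes via Hölder continuity of the distribution of $T$. You instead bypass the reversed walk and the random variable $T$ entirely: you combine the stationarity identity $\int \E[\phi(f^n_\omega(y))]\,\dd\nu_1(y) = \int \phi\,\dd\nu_1$ with the two-point contraction of Theorem \ref{teo:mean} to get $|\E[\phi(f^n_\omega(0))] - \int\phi\,\dd\nu_1| \le K e^{-\lambda_+ n}$ for $K$-Lipschitz $\phi$, then test against the bump function $\phi_{t^n}\circ d(\cdot,x)$ and invoke Hölder continuity of $\nu_1$. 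This is a valid and arguably more streamlined derivation, since it uses Theorem \ref{teo:mean} directly rather than going through the repulsor $T$ of the inverse walk. For \eqref{eq:claim2} your argument coincides with the paper's: the distribution of $\sigma$ is $\overline{\nu}_1$ by Theorem \ref{teo:MKKO}\eqref{it:MKKO2} (with $d=1$ by proximality), and this measure is Hölder by Theorem \ref{teo:holder} applied to $\overline{\mu}_1$. Both approaches ultimately rest on the same two ingredients (uniform exponential contraction and Hölder regularity of stationary measures); the difference is only in which intermediate comparison point is chosen to estimate the law of $f^n_\omega(0)$.
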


\begin{proof}[Proof of the claim]
By independence, we have 
\begin{align*}
	\P_1 \otimes \P_2\left[ d(f^n_\omega(0), l_n(\omega'))\leq t^n\right] \leq \sup_{x \in S^1}\P_1\left[ d(f^n_\omega(0), x)\leq t^n\right] = \sup_{x \in S^1} \P_1\left[ d(\overline{f}^n_\omega(0), x)\leq t^n \right].
\end{align*}
Proposition \ref{prop:convExp} and the Markov inequality imply that
\[
	\P_1\left[ d\left(\overline{f}^n_\omega(0), T(\omega)\right) \geq e^{-\lambda_+ n/2} \right] \leq e^{-\lambda_+ n/2},
\] for some $\lambda_+ > 0$ and all large $n \in \N$, and thus
\[
	\sup_{x \in S^1} \P_1\left[ d\left(\overline{f}^n_\omega(0), x\right)\leq t^n \right] \leq \sup_{x \in S^1}\P_1\left[ d\left(T(\omega), x\right) \leq t^n + e^{-\lambda_+ n/2}\right] + e^{-\lambda_+ n/2}.
\] Take $C, \alpha > 0$ such that the distribution of $T$ is $(C,\alpha)$-Hölder, so
\[
	\sup_{x \in S^1}\P_1\left[ d(T(\omega), x) \leq t^n + e^{-\lambda_+ n/2} \right] \leq C(t^n + e^{-\lambda_+ n/2})^{\alpha}
\] and
\[
	\P_1 \otimes \P_2\left[ d(f^n_\omega(0), l_n(\omega'))\leq t^n\right] \leq C(t^n + e^{-\lambda_+ n/2})^\alpha + e^{-\lambda_+ n/2}
\] is exponentially small in $n$. This gives \eqref{eq:claim1}, and \eqref{eq:claim2} follows in the same way.
\end{proof}

Take $\epsilon \in (0,1)$ so that the conclusion of Proposition \ref{prop:contraccionExp} is verified for $\P_1 = \mu_1^{\otimes \N}$ and $\P_2 = \mu_2^{\otimes \N}$. Given $\omega \in \Omega_1,\, \omega' \in \Omega_2$ and $n \in \N$ we say that \emph{$f^n_\omega$ and $f^n_{\omega'}$ are in $\epsilon$-transverse position at time $n$} if the intervals $f^n_\omega(0)^{\epsilon^n},\, f^n_{\omega'}(0)^{\epsilon^n}, \,\sigma(\omega)^{\epsilon^n}$ and $\sigma(\omega')^{\epsilon^n}$ are pairwise disjoint. This is exactly the situation in Figure \ref{fig:model} for 
\[
	I_{n,\omega} = f^n_\omega(0)^{\epsilon^n}, \quad I_{n,\omega'} = f^n_{\omega'}(0)^{\epsilon^n}, \quad J_{n,\omega} = \sigma(\omega)^{\epsilon^n}\, \text{ and }\,J_{n,\omega'} = \sigma(\omega')^{\epsilon^n}.\]

Proposition \ref{prop:auto} shows that the probability that the pair $(f^n_\omega(0)^{\epsilon^n},\sigma(\omega)^{\epsilon^n})$ or the pair $(f^n_{\omega'}(0)^{\epsilon^n},\sigma(\omega')^{\epsilon^n})$ intersect is exponentially small in $n$. The previous claim shows that the probability that the remaining pairs 
\[
	(f^n_\omega(0)^{\epsilon^n},\sigma(\omega')^{\epsilon^n}), \quad (f^n_{\omega'}(0)^{\epsilon^n},\sigma(\omega)^{\epsilon^n}), \quad (f^n_{\omega}(0)^{\epsilon^n}, f^n_{\omega'}(0)^{\epsilon^n})\,  \text{ or }\, (\sigma(\omega)^{\epsilon^n},\sigma(\omega')^{\epsilon^n})
\] intersect is exponentially small in $n$. We conclude that 
\[
	\limsup_{n\to \infty} \frac{1}{n}\log \P_1 \otimes \P_2\left[ f^n_\omega \text{ and } f^n_{\omega'} \text{ are not in $\epsilon$-transverse position}\, \right]<0. \qedhere
\]
\end{proof}

\providecommand{\bysame}{\leavevmode\hbox to3em{\hrulefill}\thinspace}

\providecommand{\href}[2]{#2}

\end{document}